\newtheorem{proposition}{Proposition}[section]
\newtheorem{lemma}[proposition]{Lemma}
\newtheorem{corollary}[proposition]{Corollary}
\newtheorem{theorem}[proposition]{Theorem}
\theoremstyle{definition}
\theoremstyle{remark}
\newtheorem{remark}[proposition]{Remark}
\newcommand{\thlabel}[1]{\label{th:#1}}
\newcommand{\thref}[1]{Theorem~\ref{th:#1}}
\newcommand{\selabel}[1]{\label{se:#1}}
\newcommand{\seref}[1]{Section~\ref{se:#1}}
\newcommand{\lelabel}[1]{\label{le:#1}}
\newcommand{\leref}[1]{Lemma~\ref{le:#1}}
\newcommand{\prlabel}[1]{\label{pr:#1}}
\newcommand{\prref}[1]{Proposition~\ref{pr:#1}}
\newcommand{\colabel}[1]{\label{co:#1}}
\newcommand{\coref}[1]{Corollary~\ref{co:#1}}
\newcommand{\relabel}[1]{\label{re:#1}}
\newcommand{\reref}[1]{Remark~\ref{re:#1}}
\newcommand{\eqlabel}[1]{\label{eq:#1}}
\newcommand{\equref}[1]{(\ref{eq:#1})}
\def\equal#1{\smash{\mathop{=}\limits^{#1}}}
\newcommand{\can}{{\rm can}}
\newcommand{\Hom}{{\rm Hom}}
\newcommand{\HOM}{{\rm HOM}}
\newcommand{\End}{{\rm End}}
\newcommand{\END}{{\rm END}}
\def\ot{\otimes}
\newcommand{\Cc}{\mathcal{C}}
\newcommand{\Dd}{\mathcal{D}}
\newcommand{\Ee}{\mathcal{E}}
\newcommand{\Mm}{\mathcal{M}}
\newcommand{\Xx}{\mathcal{X}}
\def\*C{{}^*\hspace*{-1pt}{\Cc}}
\def\text#1{{\rm {\rm #1}}}
\def\ol{\overline}
\begin{document}
\title[Hopf-Galois extensions]{Hopf-Galois extensions and
isomorphisms of small categories}

\author[S. Caenepeel]{Stefaan Caenepeel}
\address{Faculty of Engineering,
Vrije Universiteit Brussel, Pleinlaan 2, B-1050 Brussels, Belgium}
\email{scaenepe@vub.ac.be}
\urladdr{http://homepages.vub.ac.be/\~{}scaenepe/}

\subjclass[2000]{16W30, 16D90} \keywords{Hopf-Galois extension,
Morita equivalence, Picard group, cleft extension, Sweedler
cohomology}

\begin{abstract}
We associate two linear categories with two objects to a module over
the subalgebra of coinvariants of a Hopf-Galois extension, and prove that
they are isomorphic. The structure Theorem for cleft extensions, and
the Militaru-\c Stefan lifting Theorem can be obtained using these isomorphisms.
\end{abstract}

\maketitle

\section*{Introduction}
Our starting points are the following two classical results on Hopf algebras.
The first one is the structure theorem of cleft $H$-comodule algebras \cite[DT], stating that a 
cleft $H$-comodule algebra is isomorphic to a crossed product, and, conversely,
every crossed product is cleft. A comprehensive treatment can be found in \cite[Ch. 7]{Mont}.\\
The second result is the Militaru-\c Stefan lifting Theorem. Let $A$ be a 
faithfully flat Hopf-Galois
extension over its ring of coinvariants $B$, and $M$ a $B$-module. Generalizing
results due to Dade \cite{Dade} on strongly graded rings, Militaru and \c Stefan show that
the $B$-action on $M$ can be extended to an $A$-action if and only if there exists
an $H$-colinear algebra map between $H$ and the $A$-endomorphism ring of $M\ot_B A$.\\
Let us now explain the philosophy behind this note. A $k$-algebra can be viewed as a
$k$-linear category with one object. Isomorphisms between $k$-algebras can be obtained
from equivalences between $k$-linear categories. Examples of such equivalences come
from faithfully flat Hopf algebra extensions: then we have a pair of inverse equivalences
between modules over the ring of coinvariants and relative Hopf modules.\\
Now we consider ``double" $k$-algebras, namely $k$-linear categories with two objects.
For a right $H$-comodule algebra $A$, we introduce such a double algebra $\Cc_A$.
One of its endomorphism algebras consists of $k$-linear maps from $H$ to the coinvariants,
and on its homomorphism modules consists of $H$-colinear maps $H\to A$. This construction
is given in \seref{2}.\\
Given a module $M$ over the coinvariants $B$, we introduce another double algebra $\Dd_M$,
as the full subcategory of the category of $B$-modules and $H$-comodules, with objects
$M\ot H$ and $M\ot_B A$. Our main result, \thref{3.1} states that the categories
$\Cc_A$ and $\Dd_M$ are isomorphic if $A$ is a faithfully flat $H$-Galois extension of $B$.
In \seref{5}, we discuss how this category equivalence (or at least some variation of it)
can be applied the structure Theorem for cleft algebras, and in \seref{6}, we see how
the Militaru-\c Stefan lifting result can be obtained.

\section{Hopf-Galois extensions}\selabel{1}
Hopf-Galois theory was introduced in \cite{CS}, and later generalized in 
\cite{KT,Schneider0,Schneider1}. We recall the definitions and the most important results.
Let $H$ be a Hopf algebra over a commutative ring $k$, and assume that the antipode
$S$ is bijective. We use the Sweedler notation for the comultiplication: $\Delta(h)=
h_{(1)}\ot h_{(2)}$, for $h\in H$. If $M$ is a right $H$-comodule, then we use the following
notation for the coaction $\rho$: $\rho(m)=m_{[0]}\ot m_{[1]}$, for $m\in M$. In a similar
way, we write $\lambda(n)=n_{[-1]}\ot n_{[0]}$ for the left $H$-coaction on an element
$n$ in a left $H$-comodule $N$.\\
Let $A$ be a right $H$-comodule algebra, this is an algebra in the monoidal category of
right $H$-comodules. A relative right $(A,H)$-comodule is a right $A$-module that has
also the structure of a right $H$-comodule such that the compatibility relation
$$\rho(ma)=m_{[0]}a_{[0]}\ot m_{[1]}a_{[1]}$$
holds for all $m\in M$ and $a\in A$. $M^{{\rm co}H}=\{m\in M~|~\rho(m)=m\ot 1\}$
is the submodule of coinvariants, and is a right $B$-module, where $B=A^{{\rm co}A}$
is the subring of coinvariants of $A$. $\Mm_A^H$ is the category of relative Hopf modules,
and right $A$-linear $H$-colinear maps. We have a pair of adjoint functors $(F,G)$
between the categories $\Mm_B$ and $\Mm_A^H$. $F=-\ot_BA$ is the induction functor,
and $G=(-)^{{\rm co}A}$ is the coinvariants functor. The unit $\eta$ and counit $\varepsilon$ of the adjunction
are the following ($M\in \Mm_B$ and $N\in \Mm_A^H$):
\begin{eqnarray*}
&&\eta_M:\ M\to (M\ot_B A)^{{\rm co}A},~~~\eta_M(m)=m\ot_B 1;\\
&&\varepsilon_N:\ M^{{\rm co}A}\ot A\to M,~~~\varepsilon(m\ot_B a)=ma.
\end{eqnarray*}
The canonical map $\can$ associated to $A$ is defined by
$$\can:\ A\ot_B A\to A\ot H,~~\can(a\ot_B a')=aa'_{[0]}\ot a'_{[1]}.$$
If $\can$ is an isomorphism, then $A$ is called a {\sl Hopf-Galois extension}
or $H$-{\sl Galois extension} of $B$.\\
We can also consider left-right $(A,H)$-modules: these are $k$-modules with a left $A$-action and
a right $H$-coaction such that $\rho(am)=a_{[0]}m_{[0]}\ot a_{[1]}m_{[1]}$, for all $a\in A$ and
$m\in M$. We have a pair of adjoint functors
$(F'=A\ot_B-,G'=(-)^{{\rm co}H}$ between ${}_B\Mm$ and ${}_A\Mm^H$, the category of left-right
$(A,H)$-modules. The unit and counit are this time given by
\begin{eqnarray*}
\eta'_M:\ M\to (A\ot_B M)^{{\rm co}H},&&\eta'_M(m)=1\ot_B m;\\
\varepsilon'_N:\ A\ot_B N^{{\rm co}H}\to N,&&\varepsilon'_N(a\ot_B n)=an.
\end{eqnarray*}
The canonical map $\can':\ A\ot_B A\to A\ot H$ is defined by the formula
$$\can'(a\ot_B a')=a_{[0]}a'\ot a_{[1]}.$$
It is well-known that $\can$ is an isomorphism if and only if $\can'$ is an isomorphism: this
follows from the fact that $\can'=\Phi\circ \can$, with $\Phi:\ A\ot H\to A\ot H$ given by
$\Phi(a\ot h)=a_{[0]}\ot a_{[1]}S(h)$ and $\Phi^{-1}(a\ot h)=a_{[0]}\ot a_{[1]}\ol{S}(h)$

\begin{theorem}\thlabel{1.1}
Let $A$ be a right $H$-comodule algebra,
and consider the following statements:
\begin{enumerate}
\item $(F,G)$ is a pair of inverse equivalences;
\item $(F,G)$ is a pair of inverse equivalences and $A\in {}_{B}\Mm$ is flat;
\item $\can$ is an isomorphism and $A\in {}_B\Mm$ is faithfully flat;
\item $(F',G')$ is a pair of inverse equivalences;
\item $(F',G')$ is a pair of inverse equivalences and $A\in\Mm_B$ is flat;
\item $\can'$ is an isomorphism and $A\in\Mm_B$ is faithfully flat;
\end{enumerate}
Then $(3)\Longleftrightarrow (2)\Longrightarrow (1)$ and $(6)\Longleftrightarrow (5)\Longrightarrow (4)$.
If $H$ is flat as a $k$-module, then $(1)\Longleftrightarrow (2)$ and $(4)\Longleftrightarrow (5)$.
If $k$ is a field, then the six statements are equivalent.
\end{theorem}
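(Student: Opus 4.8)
The plan is to prove the two chains in parallel and then collapse them under the extra hypotheses. Since $\can'=\Phi\circ\can$ with $\Phi$ invertible (here the bijectivity of $\ol{S}=S^{-1}$ is exactly what makes $\Phi^{-1}$ available), the conditions ``$\can$ is an isomorphism'' and ``$\can'$ is an isomorphism'' coincide, and the whole block $(6)\Leftrightarrow(5)\Rightarrow(4)$ is the mirror image of $(3)\Leftrightarrow(2)\Rightarrow(1)$ obtained by interchanging left and right; concretely, passing to $A^{\rm op}$, which is a right $H^{\rm op}$-comodule algebra with coinvariants $B^{\rm op}$ (again using $S$ bijective so that $H^{\rm op}$ is a Hopf algebra), sends the first chain for $(A^{\rm op},H^{\rm op})$ onto the second chain for $(A,H)$. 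I will therefore argue only the first chain. The implication $(2)\Rightarrow(1)$ is trivial, as it merely forgets the flatness clause.

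For $(2)\Rightarrow(3)$: an equivalence is faithful, so $F=-\ot_BA$ reflects the zero object, i.e. $M\ot_BA=0$ forces $M=0$; combined with the assumed flatness of $A$ this is exactly faithful flatness. To see that $\can$ is an isomorphism, regard $A\ot H$ as a relative Hopf module via $(a\ot h)b=ab_{[0]}\ot hb_{[1]}$ and coaction on the $H$-factor; one checks directly that $\can\colon A\ot_BA\to A\ot H$ is then a morphism in $\Mm_A^H$, and that $(A\ot H)^{{\rm co}A}=A\ot 1\cong A$ (the splitting $H=k1\oplus\Ker\varepsilon$ makes this hold with no flatness hypothesis). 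Under the unit isomorphism $\eta_A\colon A\to(A\ot_BA)^{{\rm co}A}$ the map $G(\can)$ becomes the identity of $A$, hence is an isomorphism; since $G$ is an equivalence it reflects isomorphisms, so $\can$ is an isomorphism. Note this step uses only $(1)$, so in fact $(1)$ already forces $\can$ to be invertible.

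The heart of the matter is $(3)\Rightarrow(2)$, the structure/descent theorem, which I expect to be the main obstacle. Flatness of $A$ is contained in $(3)$, so it remains to prove that the unit $\eta_M$ and the counit $\varepsilon_N$ are isomorphisms, and the engine for both is faithfully flat descent along $B\to A$. For the unit, $\can$ identifies the coinvariants equalizer $M\to M\ot_BA\rightrightarrows M\ot_BA\ot H$ with the Amitsur equalizer $M\to M\ot_BA\rightrightarrows M\ot_BA\ot_BA$, which is exact precisely because $A$ is faithfully flat; this yields $(M\ot_BA)^{{\rm co}A}=M$. The counit $\varepsilon_N\colon N^{{\rm co}A}\ot_BA\to N$ is the harder half: being right $A$-linear it is a morphism of right $B$-modules, and one shows it becomes an isomorphism after applying the faithfully exact functor $-\ot_BA$, using the Galois isomorphism to trivialise the $H$-coaction (replacing $A\ot_BA$ by $A\ot H$) and thereby building the inverse out of $\can^{-1}$; faithful flatness then descends this isomorphism back to $\varepsilon_N$. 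It is here that the Galois hypothesis and faithful flatness are used in an essential, intertwined way.

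Finally the conditional equivalences. If $H$ is $k$-flat, then $\Mm_A^H$ is abelian and the forgetful functor $\Mm_A^H\to\Mm_k$ is faithful and exact; hence if $(F,G)$ is an equivalence, $F$ is exact into $\Mm_A^H$, so $-\ot_BA$ is exact into $\Mm_k$, i.e. $A$ is flat, giving $(1)\Rightarrow(2)$ (and symmetrically $(4)\Rightarrow(5)$). If $k$ is a field, $H$ is automatically flat, so each chain is internally equivalent; moreover the faithfulness observation of the second paragraph together with flatness now gives $(1)\Rightarrow(3)$ directly, since over a field ``flat and $M\ot_BA=0\Rightarrow M=0$'' is faithful flatness. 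It remains only to identify the two chains, for which I would invoke the left--right symmetry once more: over a field the Galois condition together with bijectivity of $S$ forces $A$ to be faithfully flat as a left $B$-module exactly when it is faithfully flat as a right $B$-module, so $(3)\Leftrightarrow(6)$ and all six statements coincide. This last equivalence of one-sided faithful flatness is, apart from $(3)\Rightarrow(2)$ itself, the most delicate point of the argument.
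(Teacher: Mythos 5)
First, a caveat on the comparison: the paper does not actually prove this theorem---it is recalled from the literature (\cite{CS,KT,Schneider0,Schneider1}) with no argument given---so your attempt can only be measured against the standard proofs there. Your outline does follow that standard route: the reduction of the second chain to the first by viewing $A^{\rm op}$ as a right $H^{\rm op}$-comodule algebra (legitimate precisely because $S$ is assumed bijective, so that $H^{\rm op}$ is again a Hopf algebra); the observation that $(1)$ alone forces $\can$ to be invertible, by making $A\ot H$ into a relative Hopf module, computing $(A\ot H)^{{\rm co}H}=A\ot 1$, and using that an equivalence reflects isomorphisms; the characterization of faithful flatness of a flat module by reflection of zero; faithfully flat descent for $(3)\Rightarrow(2)$; and exactness of an equivalence of abelian categories for $(1)\Rightarrow(2)$ when $H$ is $k$-flat. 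All of these steps are correct as stated.

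There is, however, one genuine gap: the bridge between the two chains over a field. After collapsing each chain internally you still need $(3)\Leftrightarrow(6)$, i.e.\ that for an $H$-Galois extension over a field, $A$ is faithfully flat as a left $B$-module if and only if it is faithfully flat as a right $B$-module. You assert this, and you correctly flag it as delicate, but nothing in your argument produces it: it does not follow from $\can'=\Phi\circ\can$, and the op-trick only interchanges the two chains globally rather than deriving either from the other. This equivalence is exactly the content of Schneider's Theorem~I in \cite{Schneider0}, whose proof passes through a third condition (surjectivity of the Galois map together with injectivity of $A$ as an $H$-comodule) and is a substantial argument in its own right; as written, your proof of the final sentence of the theorem is incomplete at precisely this point. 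A lesser omission is the counit half of $(3)\Rightarrow(2)$: the standard execution identifies $N\ot_BA\cong N\ot_A(A\ot_BA)\cong N\ot H$ via $\can$ for $N\in\Mm_A^H$, computes the coinvariants of $N\ot H$, and descends along the faithfully flat map $B\to A$; your sketch points in this direction and I would count it as routine detail rather than a flaw. The remainder of the proposal I would accept.
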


Let $A$ be a faithfully flat right $H$-Galois extension. The inverse of the canonical map
$\can$ is completely determined by the map
$$\gamma_A=\can^{-1}\circ(\eta_A\ot H):\ H\to A\ot_{B}A,\quad h\mapsto
\sum_i l_i(h)\ot_{B}r_i(h).$$ 
Then the element $\gamma_A(h)$ is
characterized by the property
\begin{equation}\eqlabel{1.2.1}
\sum_i l_i(h)r_i(h)_{[0]}\ot r_i(h)_{[1]}=1\ot h.
\end{equation}
For all $h,h'\in H$ and $a\in A$, we have (see \cite[3.4]{Schneider1}):
\begin{eqnarray}
&&\gamma_A(h)\in (A\ot_{B}A)^{B};\eqlabel{1.2.2}\\
&&\gamma_A(h_{(1)})\ot h_{(2)}=
\sum_i l_i(h)\ot_{B} r_i(h)_{[0]}\ot r_i(h)_{[1]};\eqlabel{1.2.3}\\
&&\gamma_A(h_{(2)})\ot S(h_{(1)})=
\sum_i l_i(h)_{[0]}\ot_{B} r_i(h)\ot l_i(h)_{[1]};\eqlabel{1.2.4}\\
&&\sum_i l_i(h)r_i(h)=\varepsilon(h)1_A;\eqlabel{1.2.5}\\
&&\sum_i a_{[0]}l_i(a_{[1]})\ot_B r_i(a_{[1]})=1\ot_B a;\eqlabel{1.2.6}\\
&&\sum_i l_i(\ol{S}(a_{[1]}))\ot_B r_i(\ol{S}(a_{[1]}))a_{[0]}=a\ot_B1;
\eqlabel{1.2.6a}\\
&&\gamma_A(hh')=\sum_{i,j} l_i(h')l_j(h)\ot_{B}
r_j(h)r_i(h').\eqlabel{1.2.7}
\end{eqnarray}

\section{The categories $\Cc_A$ and $\Cc'_A$}\selabel{2}
Let $A$ be a right $H$-comodule algebra, and $B=A^{{\rm co}A}$, as in \seref{1}.
We introduce a category $\Cc_A$, with two objects ${\bf 1}$ and ${\bf 2}$. The
morphisms are defined as follows.
\begin{eqnarray*}
\Cc_A({\bf 1},{\bf 1})&=&\Hom(H,B)\\
&=&\{v:\ H\to A~|~\rho(v(h))=v(h)\ot 1,~~{\rm for~all~}h\in H\};\\
\Cc_A({\bf 2},{\bf 1})&=&\Hom^H(H,A)\\
&=&\{t:\ H\to A~|~\rho(t(h))=t(h_{(1)})\ot h_{(2)},~~{\rm for~all~}h\in H\};\\
\Cc_A({\bf 1},{\bf 2})&=&\{u:\ H\to A~|~\rho(u(h))=u(h_{(2)})\ot S(h_{(1)}),~~{\rm for~all~}h\in H\};\\
\Cc_A({\bf 2},{\bf 2})&=&\{w:\ H\to A~|~\rho(w(h))=w(h_{(2)})\ot S(h_{(1)})h_{(3)},~~{\rm for~all~}h\in H\}.
\end{eqnarray*}
The composition of morphisms is given by the convolution on $\Hom(H,A)$. We have to
verify that, for $f:\ {\bf i}\to {\bf j}$ and $g:\ {\bf j}\to {\bf k}$, then $g*i\in \Cc_A({\bf i}, {\bf k})$. Let us
do this in the case where ${\bf i}={\bf j}={\bf k}={\bf 2}$: for $w,w_1\in \Cc_A({\bf 2},{\bf 2})$ and $h\in H$,
we have
\begin{eqnarray*}
\rho(w*w_1)(h)&=& \rho(w(h_{(1)})w_1(h_{(2)})\\
&=&w(h_{(2)})w_1(h_{(5)})\ot S(h_{(1)})h_{(3)}S(h_{(4)})h_{(6)}\\
&=& w(h_{(2)})w_1(h_{(3)})\ot S(h_{(1)})h_{(4)}\\
&=& (w*w_1)(h_{(2)})\ot S(h_{(1)})h_{(3)},
\end{eqnarray*}
and it follows that $w*w_1\in \Cc_A({\bf 2},{\bf 2})$, as needed. Verification in all the other cases is similar
and is left to the reader.\\
We also introduce the category $\Cc'_A$ and show that it is isomorphic
to $\Cc_A$. It is introduced because it allows us to simplify slightly some of the computations
in \seref{3}. $\Cc'_A$ also has two objects, ${\bf 1}$ and ${\bf 2}$. The morphisms are defined
in the following fashion.
$$\begin{array}{l}
\Cc'_A({\bf 1},{\bf 1})=\Hom(H,B)\\
\hspace*{15mm}=\{v': H\to A~|~\rho(v'(h))=v'(h)\ot 1,~{\rm for~all~}h\in H\};\\
\Cc'_A({\bf 1},{\bf 2})=\Hom^H(H,A)\\
\hspace*{15mm}=\{t': H\to A~|~\rho(t'(h))=t'(h_{(1)})\ot h_{(2)},~{\rm for~all~}h\in H\};\\
\Cc'_A({\bf 2},{\bf 1})=\{u': H\to A~|~\rho(u'(h))=u'(h_{(2)})\ot \ol{S}(h_{(1)}),~{\rm for~all~}h\in H\};\\
\Cc'_A({\bf 2},{\bf 2})=\{w': H\to A\,|\,\rho(w'(h))=w'(h_{(2)})\ot h_{(3)}\ol{S}(h_{(1)}),~{\rm for~all~}h\in H\}.
\end{array}$$
The composition of two morphisms in $\Cc'_A$ is given by the 
convolution product in $\Hom(H^{\rm cop},A)$:
$$(f'\star g')(h)=f'(h_{(2)})g'(h_{(1)}).$$

\begin{proposition}\prlabel{2.1}
We have an isomorphism of categories $\gamma:\ \Cc'_A\to \Cc_A$, which is the identity
at the level of objects. At the level 
of morphisms, it is given by $\gamma(f')=f'\circ S$.
\end{proposition}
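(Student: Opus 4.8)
My plan is to verify directly that $\gamma$ is a functor which is bijective on objects and on each of the four morphism spaces; a functor with these properties is automatically an isomorphism of categories. Since the antipode $S$ is bijective with inverse $\ol{S}$, the natural candidate for the inverse of $\gamma$ at the level of $k$-linear maps $H\to A$ is the assignment $f\mapsto f\circ\ol{S}$. Hence the work splits into three parts: (a) showing that $\gamma$ maps $\Cc'_A({\bf i},{\bf j})$ into $\Cc_A({\bf i},{\bf j})$ for each pair of objects; (b) showing that $f\mapsto f\circ\ol{S}$ maps $\Cc_A({\bf i},{\bf j})$ back into $\Cc'_A({\bf i},{\bf j})$, so that the two assignments are mutually inverse bijections on the hom-spaces; and (c) showing that $\gamma$ respects composition and identities.

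The computations in (a) all rest on the fact that $S$ is an anti-morphism of coalgebras, so that $\Delta(S(h))=S(h_{(2)})\ot S(h_{(1)})$ and, iterating, $\Delta^{(2)}(S(h))=S(h_{(3)})\ot S(h_{(2)})\ot S(h_{(1)})$, together with $\ol{S}(S(h))=h$. I would run through the four hom-spaces in turn, substituting $S(h)$ into the defining coaction condition of $\Cc'_A$ and using the reversal of the coproduct to land on the condition of $\Cc_A$. For example, for $w'\in\Cc'_A({\bf 2},{\bf 2})$ and $w=\gamma(w')=w'\circ S$, applying the $\Cc'_A({\bf 2},{\bf 2})$ condition to $S(h)$ turns $\rho(w(h))$ into $w'(S(h_{(2)}))\ot S(h_{(1)})\ol{S}(S(h_{(3)}))=w(h_{(2)})\ot S(h_{(1)})h_{(3)}$, which is precisely the defining condition of $\Cc_A({\bf 2},{\bf 2})$. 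The $({\bf 2},{\bf 1})$, $({\bf 1},{\bf 2})$ and $({\bf 1},{\bf 1})$ cases are entirely analogous but shorter, and step (b) is the mirror computation based on $\Delta(\ol{S}(h))=\ol{S}(h_{(2)})\ot\ol{S}(h_{(1)})$ and $S(\ol{S}(h))=h$.

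For (c) the key observation is that the twisted convolution $\star$ on $\Cc'_A$, which is the convolution built from $H^{\rm cop}$, is carried to the ordinary convolution $*$ on $\Cc_A$ by precomposition with $S$. Indeed, evaluating $\gamma(f'\star g')=(f'\star g')\circ S$ and using $\Delta(S(h))=S(h_{(2)})\ot S(h_{(1)})$ gives $f'(S(h_{(1)}))g'(S(h_{(2)}))$, which equals $(\gamma(f')*\gamma(g'))(h)$; thus $\gamma(f'\star g')=\gamma(f')*\gamma(g')$, and $\gamma$ preserves composition without reversing its direction. The identity morphism at each object is $h\mapsto\varepsilon(h)1_A$ in both categories, and it is fixed by $\gamma$ because $\varepsilon\circ S=\varepsilon$.

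I expect the only real obstacle to be clerical rather than conceptual: one must keep the Sweedler indices straight while pushing $\Delta$ (and $\Delta^{(2)}$ in the $({\bf 2},{\bf 2})$ case) through the antipode, and one must be careful that matching $\star$ against $*$ does not secretly reverse the order of composition. Once the four coaction conditions in (a), their mirror images in (b), and the single composition identity in (c) are in place, bijectivity on objects and on all hom-spaces yields the claimed isomorphism $\gamma:\Cc'_A\to\Cc_A$ immediately.
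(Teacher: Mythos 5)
Your proposal is correct and follows essentially the same route as the paper: verify $\gamma(\Cc'_A({\bf i},{\bf j}))\subset\Cc_A({\bf i},{\bf j})$ using $\Delta(S(h))=S(h_{(2)})\ot S(h_{(1)})$ (the paper writes out only the $({\bf 2},{\bf 2})$ case, exactly as you do), check that $\star$ is carried to $*$ by precomposition with $S$, and exhibit the inverse $f\mapsto f\circ\ol{S}$. Your explicit mention of verifying the mirror inclusions for $\ol{\gamma}$ and the preservation of identities is slightly more thorough than the paper, which leaves those points to the reader, but the substance is identical.
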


\begin{proof}
We have to show first that $\gamma(\Cc'_A({\bf i},{\bf j}))\subset
\Cc_A({\bf i},{\bf j})$. Let us do this in the case ${\bf i}={\bf j}={\bf 2}$, the other cases
are done in a similar way. So take $w'\in \Cc'_A({\bf 2},{\bf 2})$, and let $w=
w'\circ S=\gamma(w')$. Then for all $h\in H$, we have that
\begin{eqnarray*}
\rho(w(h))&=& \rho(w'(S(h)))=w'(S(h)_{(2)})\ot S(h)_{(3)}\ol{S}(S(h)_{(1)})\\
&=& w'(S(h_{(2)}))\ot S(h_{(1)})h_{(3)}= w(h_{(2)})\ot S(h_{(1)})h_{(3)},
\end{eqnarray*}
proving that $w\in \Cc_A({\bf 2},{\bf 2})$, as needed. It is easy to see that $\gamma$ 
respects the composition of morphisms:
\begin{eqnarray*}
&&\hspace*{-2cm}
\gamma(f'\star g')(h)=(f'\star g')(S(h))=
f'(S(h_{(1)}))g'(S(h_{(2)}))\\
&=&f(h_{(1)})g(h_{(2)})= (f*g)(h).
\end{eqnarray*}
Finally, $\gamma$ is an isomorphism. The inverse functor $\ol{\gamma}$ is given by
$\ol{\gamma}(f)=f\circ \ol{S}$.
\end{proof}

The functor $\gamma$ induces maps $\gamma_{ji}:\ \Cc'_A({\bf i},{\bf j})\to
\Cc_A({\bf i},{\bf j})$.

\section{Main result}\selabel{3}
Let $A$ be a faithfully flat right $H$-Galois extension. We assume moreover that $H$ is projective
as a $k$-module. This is always satisfied if we work over a field $k$. Let $P$ and $Q$ be
two right relative Hopf modules. We have a map
$$\rho:\ \Hom_A(P,Q)\to \Hom_A(P,Q\ot H),
~~\rho(f)(p)=f(p_{[0]})_{[0]}\ot f(p_{[0]})_{[1]}S(p_{[1]})$$
As $H$ is projective, the natural map $\Hom_A(P,Q)\ot H\to \Hom_A(P,Q\ot H)$ is a monomorphism,
and we can consider $\Hom_A(P,Q)\ot H$ as a submodule of $H\to \Hom_A(P,Q\ot H)$
We call $f\in \Hom_A(P,Q)$ {\sl rational} if $\rho(f)\in \Hom_A(P,Q)\ot H$, that is, if there exists
an element $f_{[0]}\ot f_{[1]}\in \Hom_A(P,Q)\ot H$ (summation implicitely understood) such
that
$\rho(f)(p)=f_{[0]}(p)\ot f_{[1]}$, for all $p\in P$, which is equivalent to
\begin{equation}\eqlabel{14}
\rho(f(p))=f_{[0]}(p_{[0]})\ot f_{[1]}p_{[1]}.
\end{equation}
The submodule of $\Hom_A(P,Q)$ consisting of all rational maps is denoted by $\HOM_A(P,Q)$,
and is a right $H$-comodule.
$\END_A(P)$ is a right $H$-comodule algebra. Now we take $P=M\ot_B A$, where $M\in \Mm_B$,
$E=\END_A(M\ot_BA)$ and 
$$F=E^{{\rm co}H}=\END_A^H(M\ot_B A)\cong \End_B(M),$$
in view of \thref{1.1}. Then we can consider the categories $\Cc_E$
and $\Cc'_E$, as in \seref{2}.\\
We have seen in \seref{1} that $M\ot_BA\in \Mm_A^H$ is a relative Hopf module. In particular,
it is also an object in $\Mm_B^H$, where $B$ is considered as a right $H$-comodule algebra
with trivial $H$-coaction. In fact $\Mm_B^H$ is the category of right $B$-modules with a right
$H$-coaction such that $\rho(mb)=m_{[0]}\ot m_{[1]}b$, for all $m\in M$ and $b\in B$.
$M\ot H$ is also an object of  $\Mm_B^H$, with $B$-action and $H$-coaction given by
$\rho(m\ot h)=m\ot \Delta(h)$ and $(m\ot h)b=mb\ot h$.\\
Now let $\Dd_M$ be the full subcategory of $\Mm_B^H$ with objects $M\ot_B A$ and
$M\ot H$. Out main result is the following.

\begin{theorem}\thlabel{3.1}
Let $H$ be a projective Hopf algebra, and
$A$ a faithfully flat right $H$-Galois extension. For $M\in \Mm_B$, we have a commutative
diagram of isomorphisms of categories:
$$\xymatrix{
\Cc'_E\ar[dr]_{\beta}\ar[rr]^{\gamma}&&\Cc_E\ar[dl]^{\alpha}\\
&\Dd_M&}$$
\end{theorem}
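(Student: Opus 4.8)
The plan is to realise both $\alpha$ and $\beta$ as functors that are the identity on objects after the identification ${\bf 1}\mapsto M\ot H$, ${\bf 2}\mapsto M\ot_B A$, to check each is an isomorphism, and to read off commutativity from \prref{2.1}. The object identification is forced: since $M\ot H$ is the cofree comodule on $M$ inside $\Mm_B^H$, cofreeness gives $\Hom_B^H(N,M\ot H)\cong\Hom_B(N,M)$, whence $\Hom_B^H(M\ot H,M\ot H)\cong\Hom_B(M\ot H,M)\cong\Hom(H,\End_B(M))=\Hom(H,F)=\Cc_E({\bf 1},{\bf 1})$; as $F=E^{{\rm co}H}\cong\End_B(M)$ by \thref{1.1}, the object ${\bf 1}$ must be $M\ot H$ and ${\bf 2}$ must be $M\ot_B A$. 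Because $\gamma$ is already an isomorphism, it suffices to build one of $\alpha,\beta$ directly and then verify $\alpha\circ\gamma=\beta$; I would build $\beta$ first, since on $\Cc'_E$ the colinear hom-set $\Hom^H(H,E)$ sits in position $({\bf 1},{\bf 2})$, which matches the geometrically transparent direction $M\ot H\to M\ot_B A$.

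On the two hom-sets out of ${\bf 1}$ the definition of $\beta$ is essentially ``evaluation at $1$'': for $v'\in\Cc'_E({\bf 1},{\bf 1})=\Hom(H,F)$ set $\beta(v')(m\ot h)=v'(h_{(1)})(m)\ot h_{(2)}$, and for $t'\in\Cc'_E({\bf 1},{\bf 2})=\Hom^H(H,E)$ set $\beta(t')(m\ot h)=t'(h)(m\ot_B1)$; a one–line computation with \equref{14} shows both are $B$-linear and $H$-colinear, so they land in $\Dd_M$. On the two hom-sets out of ${\bf 2}$, where the source is $M\ot_B A$, I would use the translation map $\gamma_A(h)=\sum_i l_i(h)\ot_B r_i(h)$ to absorb the $A$-variable; the natural candidate on $\Cc'_E({\bf 2},{\bf 2})$ is $\beta(w')(m\ot_B a)=w'(a_{[1]})(m\ot_B a_{[0]})$, whose $\ot_B$-balancing and colinearity rest on the comodule-algebra axioms and the relations \equref{1.2.1}--\equref{1.2.7}. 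Functoriality, namely that the twisted convolution $\star$ on $\Cc'_E$ is carried to composition in $\Dd_M$, is then a direct check: for instance $\beta(t'\star v')(m\ot h)=t'(h_{(2)})\bigl(v'(h_{(1)})(m)\ot_B1\bigr)=\beta(t')\bigl(\beta(v')(m\ot h)\bigr)$, so $\beta(t'\star v')=\beta(t')\circ\beta(v')$, and the remaining combinations are analogous.

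For bijectivity I would exhibit the inverse functor $\Dd_M\to\Cc'_E$. In the easy direction this is immediate: a colinear $B$-map $\phi\colon M\ot H\to M\ot_B A$ returns $t'\in\Hom^H(H,E)$ via $t'(h)(m\ot_B a)=\phi(m\ot h)\,a$, and a colinear endomorphism of $M\ot H$ returns $v'\in\Hom(H,F)$ via $v'(h)=(\Id\ot\varepsilon)\circ\phi(-\ot h)$. The substance is the opposite direction: to send a morphism $g\colon M\ot_B A\to M\ot_B A$ back to $\Cc'_E({\bf 2},{\bf 2})$ one sets, e.g., $w'(h)(m\ot_B a)=\sum_i g(m\ot_B l_i(h))\,r_i(h)\,a$ and must prove that this is well defined over $\ot_B$, takes values in the rational part $\END_A(M\ot_B A)$, carries the $\ol S$-coaction type defining $\Cc'_E({\bf 2},{\bf 2})$, and is a two-sided inverse of $\beta$. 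Each of these points is governed by the Galois identities \equref{1.2.1}--\equref{1.2.7} together with faithful flatness of $A$ over $B$, which via \thref{1.1} is precisely what makes $\gamma_A$ an honest inverse of $\can$ and secures $E^{{\rm co}H}\cong\End_B(M)$. I expect this verification, especially the $\ot_B$-well-definedness and the inverse identities, to be the main obstacle, and the point where both projectivity of $H$ (needed for the rational comodule structure on $E$) and faithful flatness are indispensable.

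Finally, commutativity. Defining $\alpha:=\beta\circ\ol\gamma$ (equivalently, defining $\alpha$ on $\Cc_E$ by the same recipes with $S$ inserted so that the colinear hom-set again lands in the easy direction), the relation $\gamma(f')=f'\circ S$ from \prref{2.1} forces $\alpha\circ\gamma=\beta$ by inspection on each hom-set: precomposition with $S$ is exactly the discrepancy between the $\Cc_E$- and $\Cc'_E$-coaction types, so the antipodes introduced by $\alpha$ cancel those produced by $\gamma$. This delivers the asserted commuting triangle of isomorphisms of categories.
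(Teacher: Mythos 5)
Your proposal follows essentially the same route as the paper: the same object identification, the same hom-set formulas (evaluation at $1$ for morphisms out of $M\ot H$, the coaction and the translation map $\gamma_A$ for morphisms out of $M\ot_B A$, with inverses of the form $\sum_i g(m\ot_B l_i(h))r_i(h)a$), the same reduction $\alpha=\beta\circ\ol{\gamma}$ making the triangle commute, and a direct verification that $\star$ goes to composition. The only difference is one of emphasis: the paper spends most of its effort on the six mixed composition identities (using \equref{1.2.4}--\equref{1.2.7}), which you dismiss as ``analogous,'' while correctly locating the remaining substance in the well-definedness and inverse checks.
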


At the level of morphisms, the functors $\alpha$ and $\alpha'$ are defined in the obvious way:
$$\alpha({\bf 1})=\alpha'({\bf 1})=M\ot H~~:~~\alpha({\bf 2})=\alpha'({\bf 2})=M\ot_B A.$$
In the subsequent Lemmas, we will define $\alpha$ and $\alpha'$ at the level of morphisms. The proof
of the following result is straightforward, and is left to the reader.

\begin{lemma}\lelabel{3.2}
We have an isomorphism of $k$-modules
$$\delta_1:\ \Hom_B(M\ot_B A,M)\to \Hom_B^H(M\ot_BA,M\ot H),$$
given by
$$\delta_1(\phi)(m\ot_Ba)=\phi(m\ot_Ba_{[0]})\ot a_{[1]}~~;~~
\ol{\delta}_1(\varphi)=(M\ot\varepsilon)\circ\varphi.$$
We have an isomorphism of $k$-algebras
$$\delta_2:\ \Hom_B(M\ot H,M)\to \End_B^H(M\ot H),$$
given by
$$\delta_2(\Theta)(m\ot h)=\Theta(m\ot h_{(1)})\ot h_{(2)}~~;~~
\ol{\delta}_2(\theta)=(M\ot\varepsilon)\circ \theta.$$
The multiplication on $\Hom_B(M\ot H,M)$ is given by the formula
$\Theta\cdot \Theta'=\Theta\circ \delta_2(\Theta')$, or, more explicitly,
\begin{equation}\eqlabel{3.2.1}
(\Theta\cdot \Theta')(m\ot h)=\Theta(\Theta'(m\ot h_{(1)})\ot h_{(2)}).
\end{equation}
\end{lemma}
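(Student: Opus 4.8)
The plan is to recognise both $\delta_1$ and $\delta_2$ as instances of a single natural bijection, the counit of the ``cofree comodule'' adjunction, and prove that once. For any $N\in\Mm_B^H$ and any $M\in\Mm_B$ (carrying the trivial coaction), I claim there is an isomorphism of $k$-modules
\[
\Psi_N:\ \Hom_B(N,M)\to\Hom_B^H(N,M\ot H),\qquad \Psi_N(\phi)(n)=\phi(n_{[0]})\ot n_{[1]},
\]
with inverse $\ol{\Psi}_N(\varphi)=(M\ot\varepsilon)\circ\varphi$. Taking $N=M\ot_BA$, whose coaction sends $m\ot_Ba$ to $m\ot_Ba_{[0]}\ot a_{[1]}$, reproduces $\delta_1$ and $\ol{\delta}_1$; taking $N=M\ot H$, whose coaction sends $m\ot h$ to $m\ot h_{(1)}\ot h_{(2)}$, reproduces $\delta_2$ and $\ol{\delta}_2$. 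So it suffices to establish the general statement and then specialise.

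First I would verify that $\Psi_N(\phi)$ really lands in $\Hom_B^H(N,M\ot H)$. Right $B$-linearity uses the compatibility defining $\Mm_B^H$: since $B$ carries the trivial coaction, $\rho(nb)=n_{[0]}b\ot n_{[1]}$, and then $B$-linearity of $\phi$ together with the $B$-action $(m'\ot h)b=m'b\ot h$ on $M\ot H$ gives $\Psi_N(\phi)(nb)=\Psi_N(\phi)(n)b$. Right $H$-colinearity is precisely coassociativity of the coaction on $N$: both $\rho_{M\ot H}(\Psi_N(\phi)(n))$ and $(\Psi_N(\phi)\ot H)(\rho_N(n))$ reduce to $\phi(n_{[0]})\ot n_{[1](1)}\ot n_{[1](2)}$. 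The map $\ol{\Psi}_N$ is $B$-linear because $M\ot\varepsilon$ is.

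Next I would show the two maps are mutually inverse. The identity $\ol{\Psi}_N\circ\Psi_N=\Id$ is immediate from the counit axiom, since $(M\ot\varepsilon)(\phi(n_{[0]})\ot n_{[1]})=\phi(n_{[0]}\varepsilon(n_{[1]}))=\phi(n)$. The reverse identity $\Psi_N\circ\ol{\Psi}_N=\Id$ is the one genuine point, and it is where $H$-colinearity of $\varphi$ is indispensable: writing colinearity as $\rho_{M\ot H}(\varphi(n))=\varphi(n_{[0]})\ot n_{[1]}$ and applying $M\ot\varepsilon\ot H$ to both sides, the left-hand side collapses by the counit axiom to $\varphi(n)$, while the right-hand side is exactly $\Psi_N(\ol{\Psi}_N(\varphi))(n)$. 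I expect this to be the crux of the whole lemma: a $B$-linear map into the cofree comodule $M\ot H$ is determined by its composite with $M\ot\varepsilon$ \emph{only} because it is colinear, and this alone forces the surjectivity of each $\ol{\delta}_i$. Everything surrounding it is routine Sweedler bookkeeping.

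Finally I would treat the $k$-algebra structure for $\delta_2$. Since $\delta_2=\Psi_{M\ot H}$ is already a $k$-module isomorphism onto the algebra $\End_B^H(M\ot H)$ under composition, I transport the product and set $\Theta\cdot\Theta':=\ol{\delta}_2\bigl(\delta_2(\Theta)\circ\delta_2(\Theta')\bigr)$, which makes $\delta_2$ an algebra isomorphism by construction. Using $\ol{\delta}_2=(M\ot\varepsilon)\circ(-)$ together with $(M\ot\varepsilon)\circ\delta_2(\Theta)=\Theta$, this collapses to $\Theta\cdot\Theta'=\Theta\circ\delta_2(\Theta')$, and unwinding $\delta_2(\Theta')(m\ot h)=\Theta'(m\ot h_{(1)})\ot h_{(2)}$ produces the explicit formula \equref{3.2.1}. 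Here I anticipate no obstacle beyond a single application of coassociativity to confirm that the transported product agrees with the stated composition formula.
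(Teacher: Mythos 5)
Your proof is correct; the paper itself leaves this lemma to the reader, and your argument is exactly the intended ``straightforward'' verification, with the nice extra touch of packaging $\delta_1$ and $\delta_2$ as two instances of the single cofree-comodule bijection $\Hom_B(N,M)\cong\Hom_B^H(N,M\ot H)$ so that the colinearity and mutual-inverse checks are done only once. You correctly identify the one non-trivial point (that $\Psi_N\circ\ol{\Psi}_N=\Id$ requires colinearity of $\varphi$), and the transport-of-structure derivation of \equref{3.2.1} is fine as stated.
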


\begin{lemma}\lelabel{3.3}
We have an algebra map
$$\tilde{\beta}_{11}:\ \Cc'_E({\bf 1},{\bf 1})=\Hom(H,F)\to \Hom_B(M\ot H, M),$$
given by
$$\tilde{\beta}_{11}(v')(m\ot h)=\eta_M^{-1}\bigl(v'(h)(m\ot_B 1)\bigr).$$
\end{lemma}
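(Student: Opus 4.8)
The plan is to verify three things in turn: that $\tilde{\beta}_{11}(v')$ really lies in $\Hom_B(M\ot H,M)$, that $\tilde{\beta}_{11}$ respects the two multiplications, and that it preserves units. The mechanism underlying all three is that, by \thref{1.1}, the unit $\eta_M\colon M\to(M\ot_B A)^{{\rm co}A}$ of the adjunction is an isomorphism, so that $\eta_M^{-1}$ is defined precisely on coinvariant elements and satisfies $\eta_M\circ\eta_M^{-1}={\rm id}$.

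First I would settle well-definedness. Because the coaction on $M\ot_B A$ is induced from that of $A$, we have $\rho(m\ot_B 1)=m\ot_B 1\ot 1$, so $m\ot_B 1$ is coinvariant; and since $v'(h)\in F=\END_A^H(M\ot_B A)$ is $H$-colinear, $v'(h)(m\ot_B 1)$ is again coinvariant, hence lies in the domain of $\eta_M^{-1}$. The assignment $(m,h)\mapsto\eta_M^{-1}\bigl(v'(h)(m\ot_B 1)\bigr)$ is $k$-bilinear, so it descends to a map $M\ot H\to M$. For right $B$-linearity I would use $mb\ot_B 1=(m\ot_B 1)b$ together with the right $A$-linearity of $v'(h)$ and of $\eta_M^{-1}$, which yields $\tilde{\beta}_{11}(v')(mb\ot h)=\tilde{\beta}_{11}(v')(m\ot h)\,b$.

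The heart of the proof is multiplicativity. Writing $\Theta_i=\tilde{\beta}_{11}(v'_i)$ for $i=1,2$, the product of \leref{3.2} reads $(\Theta_1\cdot\Theta_2)(m\ot h)=\Theta_1\bigl(\Theta_2(m\ot h_{(1)})\ot h_{(2)}\bigr)$. Setting $m'=\Theta_2(m\ot h_{(1)})=\eta_M^{-1}\bigl(v'_2(h_{(1)})(m\ot_B 1)\bigr)$, the decisive step is that $v'_2(h_{(1)})(m\ot_B 1)$ is coinvariant, so applying $\eta_M$ returns it unchanged: $m'\ot_B 1=v'_2(h_{(1)})(m\ot_B 1)$. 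Substituting, the right-hand side collapses to $\eta_M^{-1}\bigl(v'_1(h_{(2)})\bigl(v'_2(h_{(1)})(m\ot_B 1)\bigr)\bigr)$. On the other side, unfolding the convolution $\star$ of $\Cc'_E$ (with composition as the product of $E$) gives $(v'_1\star v'_2)(h)=v'_1(h_{(2)})\circ v'_2(h_{(1)})$, so $\tilde{\beta}_{11}(v'_1\star v'_2)(m\ot h)$ equals the very same expression, and the two agree. This is exactly why $\Cc'_E$ rather than $\Cc_E$ is used here: the $H^{\rm cop}$-twisted convolution feeds $h_{(1)}$ into $v'_2$ and $h_{(2)}$ into $v'_1$, matching the order in which $h_{(1)}$ and $h_{(2)}$ appear in the \leref{3.2} product.

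Finally, for units I would note that the $\star$-identity of $\Cc'_E({\bf 1},{\bf 1})$ is $h\mapsto\varepsilon(h)\,1_E$, where $1_E$ is the identity endomorphism of $M\ot_B A$; its image under $\tilde{\beta}_{11}$ is $(m\ot h)\mapsto\varepsilon(h)\,\eta_M^{-1}(m\ot_B 1)=\varepsilon(h)m$, which is precisely the unit $M\ot\varepsilon$ of $\Hom_B(M\ot H,M)$ obtained as $\ol{\delta}_2({\rm id})$ in \leref{3.2}. I expect the genuine difficulty to be bookkeeping rather than substance: one has to fix the composition convention in $E$ so that it is compatible with the $\star$-product, and then recognize that it is the coinvariance of $v'(h)(m\ot_B 1)$ which makes the cancellation $\eta_M\circ\eta_M^{-1}={\rm id}$ available at the key step.
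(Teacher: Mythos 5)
Your proof is correct and follows essentially the same route as the paper: well-definedness via the coinvariance of $v'(h)(m\ot_B 1)$ (coming from $v'(h)\in F=E^{{\rm co}H}$) together with the invertibility of $\eta_M$ from \thref{1.1}, then right $B$-linearity, then multiplicativity by cancelling $\eta_M\circ\eta_M^{-1}$ at the intermediate stage so that the $H^{\rm cop}$-convolution $v'_1(h_{(2)})\circ v'_2(h_{(1)})$ matches the product of \leref{3.2}. The only differences are cosmetic: you add the unit check, which the paper leaves implicit, while the paper's proof additionally constructs the explicit inverse $\hat{\beta}_{11}$ (not required by the statement as given, but used for \coref{3.4}).
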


\begin{proof}
For all $h\in H$, we have that $v'(h)\in F=E^{{\rm co}H}$. Using \equref{14}, we find
that
$$\rho\bigl(v'(h)(m\ot_B 1)\bigr)= v'(h)(m\ot_B 1)\ot 1,$$
hence $v'(h)(m\ot_B 1)\in (M\ot_B A)^{{\rm co}H}$. We know from \thref{1.1} that
$\eta_M:\ M\to (M\ot_B A)^{{\rm co}H}$ is an isomorphism, so that
$\tilde{\beta}_{11}$ is well-defined, and is characterized by the formula
\begin{equation}\eqlabel{3.3.1}
\tilde{\beta}_{11}(v')(m\ot h)\ot_B 1= v'(h)(m\ot_B1).
\end{equation}
Let us now show that $\tilde{\beta}_{11}(v')$ is right $B$-linear. For all $m\in M$,
$b\in B$ and $h\in H$, we have
\begin{eqnarray*}
&&\hspace*{-2cm}
\tilde{\beta}_{11}(v')(mb\ot h)\ot_B 1= v'(h)(mb\ot_B1)= v'(h)(m\ot_B1)b\\
&=& \tilde{\beta}_{11}(v')(m\ot h)\ot_B b=\tilde{\beta}_{11}(v')(m\ot h)b\ot_B 1.
\end{eqnarray*}
We will now show that $\tilde{\beta}_{11}$ has an inverse, given by
$$\bigl(\hat{\beta}_{11}(\Theta)(h)\bigr)(m\ot_B a)=
\Theta(m\ot h)\ot_B a.$$
We have to show first that $\hat{\beta}_{11}$ is well-defined, that is,
$\hat{\beta}_{11}(h)\in F$, for all $h\in H$. To this end, we compute that
\begin{eqnarray*}
&&\hspace*{-2cm}
\rho\Bigl(\bigl(\hat{\beta}_{11}(\Theta)(h)\bigr)(m\ot_B a)\Bigr)
= \Theta(m\ot h)\ot_B a_{[0]}\ot a_{[1]}\\
&=& \bigl(\hat{\beta}_{11}(\Theta)(h)\bigr)(m\ot_B a_{[0]})\ot a_{[1]},
\end{eqnarray*}
and conclude from \equref{14} that $\rho\bigl(\hat{\beta}_{11}(\Theta)(h)\bigr)=
\hat{\beta}_{11}(\Theta)(h)\ot 1$.\\
We now show that $\tilde{\beta}_{11}$ and $\hat{\beta}_{11}$ are inverses.
For all $\Theta\in \Hom_B(M\ot H, M)$, $v'\in \Hom(H,F)$
$m\in M$, $h\in H$ and $a\in A$, we have
\begin{eqnarray*}
&&\hspace*{-2cm}
\tilde{\beta}_{11}\bigl(\hat{\beta}_{11}(\Theta)\bigr)(m\ot h)\ot_B 1
= (\hat{\beta}_{11}(\Theta)(h)\bigr)(m\ot_B 1)\\
&=& \Theta(m\ot h)\ot_B 1;\\
&&\hspace*{-2cm}\bigl(\hat{\beta}_{11}(\tilde{\beta}_{11}(v'))(h)\bigr)(m\ot_B a)
= (\tilde{\beta}_{11}(v'))(m\ot h)\ot_B a\\
&=& v'(h)(m\ot_B 1)a=v'(h)(m\ot_B 1).
\end{eqnarray*}
Let us finally show that $\tilde{\beta}_{11}$ is an algebra map. For $v',v'_1:\ H\to F$,
$m\in M$ and $h\in H$, we have
\begin{eqnarray*}
&&\hspace*{-1cm}
\bigl(\tilde{\beta}_{11}(v')\cdot \tilde{\beta}_{11}(v'_1)\bigr)(m\ot h)\ot_B 1
=
\tilde{\beta}_{11}(v')\bigl(\tilde{\beta}_{11}(v'_1)(m\ot h_{(1)})\ot h_{(2)}\bigr)\ot 1\\
&=& v'(h_{(2)})\bigl(\tilde{\beta}_{11}(v'_1)(m\ot h_{(1)})\ot_B 1\bigr)
= (v'(h_{(2)})\circ v'_1(h_{(1)}))(m\ot_B 1)\\
&=&(v'\star v'_1)(h)(m\ot_B 1)
=\tilde{\beta}_{11}(v'\star v'_1)(m\ot h)\ot_B 1,
\end{eqnarray*}
and it follows that $\tilde{\beta}_{11}(v'\star v'_1)=\tilde{\beta}_{11}(v')\cdot \tilde{\beta}_{11}(v'_1)$.
\end{proof}

\begin{corollary}\colabel{3.4}
We have algebra isomorphisms
$$\beta_{11}=\delta_2\circ \tilde{\beta}_{11}:\
 \Cc'_E({\bf 1},{\bf 1})\to \End_B^H(M\ot H);$$
 $$\alpha_{11}=\delta_2\circ \tilde{\beta}_{11}\circ \gamma_{11}^{-1}:\
 \Cc_E({\bf 1},{\bf 1})\to \End_B^H(M\ot H).$$
 \end{corollary}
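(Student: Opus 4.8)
The plan is to assemble the two claimed isomorphisms from ingredients already in hand, since each is a composite of maps individually known to be algebra isomorphisms.

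For $\beta_{11}$, I would first invoke \leref{3.3}: it provides the algebra map $\tilde{\beta}_{11}:\ \Cc'_E({\bf 1},{\bf 1})\to \Hom_B(M\ot H,M)$, and its proof constructs an explicit two-sided inverse $\hat{\beta}_{11}$, so that $\tilde{\beta}_{11}$ is in fact an algebra isomorphism. I would then invoke \leref{3.2}, which supplies the algebra isomorphism $\delta_2:\ \Hom_B(M\ot H,M)\to \End_B^H(M\ot H)$, where the source is equipped with the product \equref{3.2.1}. The composite $\beta_{11}=\delta_2\circ\tilde{\beta}_{11}$ is therefore an algebra isomorphism, being a composition of two such.

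For $\alpha_{11}$, the one extra ingredient is that the component $\gamma_{11}:\ \Cc'_E({\bf 1},{\bf 1})\to \Cc_E({\bf 1},{\bf 1})$ of the functor of \prref{2.1} is itself an algebra isomorphism. Since $\gamma$ is an isomorphism of categories that is the identity on objects, its action on the endomorphism set at ${\bf 1}$ is a bijection; and the computation in the proof of \prref{2.1} showing $\gamma(f'\star g')=\gamma(f')*\gamma(g')$ says precisely that this bijection intertwines the $\star$-product on $\Cc'_E({\bf 1},{\bf 1})$ with the convolution product on $\Cc_E({\bf 1},{\bf 1})$. Hence $\gamma_{11}$, and therefore $\gamma_{11}^{-1}$, is an algebra isomorphism, so that $\alpha_{11}=\beta_{11}\circ\gamma_{11}^{-1}$ is again a composite of algebra isomorphisms.

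The argument is thus purely formal once \leref{3.2}, \leref{3.3} and \prref{2.1} are available, and there is no genuine obstacle. The only point requiring a moment's attention is the bookkeeping of the three products involved—the convolution $*$, the opposite convolution $\star$, and the twisted product \equref{3.2.1}—so as to confirm that each constituent map is an algebra homomorphism for exactly these structures; but this matching is guaranteed by the cited results.
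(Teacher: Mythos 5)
Your proposal is correct and is exactly the argument the paper intends: Corollary 3.4 is stated without proof precisely because it is the composite of the algebra isomorphisms supplied by \leref{3.2}, \leref{3.3} and \prref{2.1}, with the product-matching ($\star$, $*$, and \equref{3.2.1}) handled by those results. Nothing is missing.
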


\begin{lemma}\lelabel{3.5}
We have an isomorphism of $k$-modules
$$\beta_{21}:\ \Cc'_E({\bf 1},{\bf 2})=\Hom(H,E)\to \Hom_B^H(M\ot H,M\ot_B A),$$
given by
$$\beta_{21}(t')(m\ot h)=t'(h)(m\ot_B 1),$$
for $t'\in \Hom(H,E)$, $m\in M$, $h\in H$.
Consequently, we also have an isomorphism
$$\alpha_{21}=\beta_{21}\circ\gamma_{21}^{-1}:\ \Cc_E({\bf 1},{\bf 2})\to \Hom_B^H(M\ot H,M\ot_B A).$$
\end{lemma}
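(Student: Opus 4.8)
The plan is to follow the blueprint of \leref{3.3}: first check that $\beta_{21}(t')$ is a genuine morphism in $\Mm_B^H$, then exhibit an explicit two-sided inverse. Since $t'(h)\in E=\END_A(M\ot_B A)$ is in particular a right $A$-linear endomorphism, right $B$-linearity of $\beta_{21}(t')$ is immediate: for $b\in B$ one has $(m\ot h)b=mb\ot h$ and $t'(h)(mb\ot_B 1)=t'(h)(m\ot_B b)=t'(h)(m\ot_B 1)b$, which is $\beta_{21}(t')(m\ot h)b$. So the only substantive part of well-definedness is $H$-colinearity.

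For colinearity I would compute $\rho\bigl(\beta_{21}(t')(m\ot h)\bigr)=\rho\bigl(t'(h)(m\ot_B 1)\bigr)$ using the rationality identity \equref{14} with $f=t'(h)$ and $p=m\ot_B 1$. Because $\rho(m\ot_B 1)=m\ot_B 1\ot 1$, this collapses to $(t'(h))_{[0]}(m\ot_B 1)\ot (t'(h))_{[1]}$, and the defining colinearity of $t'\in\Cc'_E({\bf 1},{\bf 2})$, namely $(t'(h))_{[0]}\ot (t'(h))_{[1]}=t'(h_{(1)})\ot h_{(2)}$, turns this into $\beta_{21}(t')(m\ot h_{(1)})\ot h_{(2)}$. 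That is exactly the colinearity condition for a map $M\ot H\to M\ot_B A$, so $\beta_{21}(t')\in\Hom_B^H(M\ot H,M\ot_B A)$.

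The candidate inverse, mirroring $\hat\beta_{11}$, is $\hat\beta_{21}(\varphi)(h)(m\ot_B a)=\varphi(m\ot h)a$. Checking that this is well defined over $\ot_B$ and right $A$-linear is routine (it uses right $B$-linearity of $\varphi$ and associativity of the $A$-action), and the two composites are immediate: $\beta_{21}(\hat\beta_{21}(\varphi))(m\ot h)=\varphi(m\ot h)1=\varphi(m\ot h)$, while $\hat\beta_{21}(\beta_{21}(t'))(h)(m\ot_B a)=t'(h)(m\ot_B 1)a=t'(h)(m\ot_B a)$ by $A$-linearity.

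I expect the genuine obstacle to be verifying that $\hat\beta_{21}(\varphi)(h)$ really lands in $\Cc'_E({\bf 1},{\bf 2})$, i.e.\ that each $\hat\beta_{21}(\varphi)(h)$ is a \emph{rational} endomorphism and that $h\mapsto\hat\beta_{21}(\varphi)(h)$ is $H$-colinear. Here I would compute $\rho\bigl(\hat\beta_{21}(\varphi)(h)\bigr)$ from the definition $\rho(f)(p)=f(p_{[0]})_{[0]}\ot f(p_{[0]})_{[1]}S(p_{[1]})$, insert the relative Hopf module compatibility to expand $\rho\bigl(\varphi(m\ot h)a_{[0]}\bigr)$, use the colinearity of $\varphi$ to replace $\varphi(m\ot h)_{[0]}\ot\varphi(m\ot h)_{[1]}$ by $\varphi(m\ot h_{(1)})\ot h_{(2)}$, and then apply coassociativity of the coaction together with the antipode identity $a_{[0]}\ot a_{[1]}S(a_{[2]})=a\ot 1$ to collapse the $A$-tail. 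The result should be $\rho\bigl(\hat\beta_{21}(\varphi)(h)\bigr)=\sum\hat\beta_{21}(\varphi)(h_{(1)})\ot h_{(2)}$; since $\Delta(h)$ is a finite sum this lies in $E\ot H$ (using projectivity of $H$ as in \seref{3}), which simultaneously proves rationality and the required colinearity. Finally, the isomorphism $\alpha_{21}=\beta_{21}\circ\gamma_{21}^{-1}$ is a formal consequence: $\gamma_{21}$ is bijective because $\gamma$ is an isomorphism of categories by \prref{2.1}, so $\alpha_{21}$ is a composite of bijections.
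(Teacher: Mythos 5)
Your proposal is correct and follows essentially the same route as the paper: the same colinearity check for $\beta_{21}(t')$ via the rationality identity \equref{14}, the same inverse $\hat\beta_{21}(\varphi)(h)(m\ot_B a)=\varphi(m\ot h)a$, and the same composite verifications. The only cosmetic difference is that you verify rationality/colinearity of $\hat\beta_{21}(\varphi)(h)$ by unwinding the definition of $\rho$ on $\Hom_A(P,Q)$ and collapsing with the antipode, whereas the paper computes $\rho\bigl(\hat\beta_{21}(\varphi)(h)(m\ot_B a)\bigr)$ directly and concludes via the characterization \equref{14}; both amount to the same check.
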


\begin{proof}
It is easy to see that $\beta_{21}(t')$ is right $A$-linear:
\begin{eqnarray*}
&&\hspace*{-2cm}
\beta_{21}(t')(mb\ot h)=t'(h)(mb\ot_B 1)=t'(h)(m\ot_B b)\\
&=&t'(h)(m\ot_B 1)b=(\beta_{21}(t')(m\ot h))b.
\end{eqnarray*}
$\beta_{21}(t')$ is right $H$-colinear:
\begin{eqnarray*}
&&\hspace*{-2cm}
\rho\bigl(\beta_{21}(t')(m\ot h)\bigr)
= \rho\bigl(t'(h)(m\ot_B 1)\bigr)
= t'(h)_{[0]}(m\ot_B 1)\ot t'(h)_{[1]}\\
&=& t'(h_{(1)})(m\ot_B 1)\ot h_{(2)}=\beta_{21}(t')(m\ot h_{(1)})\ot h_{(2)}.
\end{eqnarray*}
This shows that $\beta_{21}(t')\in \Hom_B^H(M\ot H,M\ot_B A)$, as needed.
Now we define a map
$$\ol{\beta}_{21}:\  \Hom_B^H(M\ot H,M\ot_B A)\to \Hom(H,E)$$
by the formula
$$(\ol{\beta}_{21}(\psi))(h)(m\ot_B a)=\psi(m\ot h)a.$$
We first show that $\ol{\beta}_{21}$ is well-defined, and then that it is inverse to $\beta_{21}$.\\
$\ol{\beta}_{21}(\psi)$ is right $H$-colinear: we first compute
\begin{eqnarray*}
&&\hspace*{-2cm}
\rho\bigl((\ol{\beta}_{21}(\psi))(h)(m\ot_B a)\bigr)
=\rho\bigl(\psi(m\ot h)a\bigr)\\
&=& \psi(m\ot h_{(1)})a_{[0]}\ot h_{(2)}a_{[1]}\\
&=& (\ol{\beta}_{21}(\psi))(h_{(1)})(m\ot_B a_{[0]})\ot h_{(2)}a_{[1]},
\end{eqnarray*}
and we conclude from \equref{14} that $\rho\bigl((\ol{\beta}_{21}(\psi))(h)\bigr)=(\ol{\beta}_{21}(\psi))
(h_{(1)})\ot h_{(2)}$, as needed. Let us finally show that $\beta_{21}$ and $\ol{\beta}_{21}$
are inverses. For all $t'\in \Hom^H(H,E)$, $\psi\in \Hom_B^H(M\ot H,M\ot_B A)$,
$m\in M$, $a\in A$ and $h\in H$, we have
\begin{eqnarray*}
&&\hspace*{-2cm}
(\beta_{21}\circ \ol{\beta}_{21})(\psi)(m\ot h)=
( \ol{\beta}_{21}(h))(m\ot_B 1)\\
&=& \psi(m\ot_B 1)a=\psi(m\ot_B a);\\
&&\hspace*{-2cm}
\Bigl(\bigl((\ol{\beta}_{21}\circ {\beta}_{21})(t')\bigr)(h)\Bigr)(m\ot_B a)
=(\beta_{21}(t')(m\ot h)a\\
&=& t'(h)(m\ot_B 1)a=t'(h)(m\ot_B a).
\end{eqnarray*}
\end{proof}

\begin{lemma}\lelabel{3.6}
We have an isomorphism of $k$-modules
$$\tilde{\beta}_{12}:\ \Cc'_E({\bf 2},{\bf 1})\to \Hom_B(M\ot_BA, M),$$
given by
$$\tilde{\beta}_{12}(u')(m\ot_B a)=\eta_M^{-1}\bigl(u'(a_{[1]})(m\ot_B a_{[0]})\bigr).$$
\end{lemma}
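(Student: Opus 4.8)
The plan is to prove the statement in four moves: show that the target of $\tilde\beta_{12}(u')$ really lies in the coinvariants so that $\eta_M^{-1}$ may be applied, write down an explicit inverse built from the translation map $\gamma_A$, check that this inverse lands in $\Cc'_E({\bf 2},{\bf 1})$, and finally verify the two composites.

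First I would settle well-definedness, i.e. that $x:=u'(a_{[1]})(m\ot_B a_{[0]})\in(M\ot_B A)^{{\rm co}H}$. Fixing $u'\in\Cc'_E({\bf 2},{\bf 1})$ and coacting, I apply the comodule structure \equref{14} of $E$ together with the defining relation $\rho(u'(h))=u'(h_{(2)})\ot\ol S(h_{(1)})$ and the coaction $\rho(m\ot_B a)=m\ot_B a_{[0]}\ot a_{[1]}$ of $M\ot_B A$; this gives $\rho(x)=u'((a_{[1]})_{(2)})(m\ot_B(a_{[0]})_{[0]})\ot\ol S((a_{[1]})_{(1)})(a_{[0]})_{[1]}$. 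Writing the iterated coaction of $a$ by comodule coassociativity as $a_{[0]}\ot a_{[1]}\ot a_{[2]}\ot a_{[3]}$, this becomes $u'(a_{[3]})(m\ot_B a_{[0]})\ot\ol S(a_{[2]})a_{[1]}$; regrouping the three $H$-legs and invoking the inverse-antipode identity $\ol S(h_{(2)})h_{(1)}=\varepsilon(h)1$ collapses the $H$-factor to $1$ and yields $\rho(x)=x\ot 1$. Hence $\eta_M^{-1}$ applies. Right $B$-linearity of $\tilde\beta_{12}(u')$ then follows at once, since $b\in B$ is coinvariant forces $(ab)_{[0]}\ot(ab)_{[1]}=a_{[0]}b\ot a_{[1]}$, and both $u'(a_{[1]})$ and $\eta_M^{-1}$ are right $B$-linear.

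Next I would define the candidate inverse $\ol\beta_{12}\colon\Hom_B(M\ot_B A,M)\to\Cc'_E({\bf 2},{\bf 1})$ by
$$\ol\beta_{12}(\phi)(h)(m\ot_B a)=\sum_i\phi(m\ot_B l_i(\ol S(h)))\ot_B r_i(\ol S(h))\,a,$$
where $\gamma_A(\ol S(h))=\sum_i l_i(\ol S(h))\ot_B r_i(\ol S(h))$ is the translation map of \seref{1}. Right $A$-linearity of $\ol\beta_{12}(\phi)(h)$ is immediate. The appearance of $\ol S$ here is forced: it is exactly what makes the formula match \equref{1.2.6a}, whose right-hand tensor factor carries $a_{[0]}$ on the right, in the same position as the free $a$ above. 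To see $u':=\ol\beta_{12}(\phi)$ satisfies the defining relation of $\Cc'_E({\bf 2},{\bf 1})$, I coact on $u'(h)(m\ot_B a)$, use multiplicativity of the $A$-coaction, and reduce the claim to the coaction of $r_i(\ol S(h))$, which is governed by \equref{1.2.3}; together with the anti-coalgebra identity $\Delta(\ol S(h))=\ol S(h_{(2)})\ot\ol S(h_{(1)})$ this gives $\sum_i l_i(\ol S(h))\ot_B r_i(\ol S(h))_{[0]}\ot r_i(\ol S(h))_{[1]}=\gamma_A(\ol S(h_{(2)}))\ot\ol S(h_{(1)})$, and hence precisely $\rho(u'(h))=u'(h_{(2)})\ot\ol S(h_{(1)})$. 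This same display shows $\rho(u'(h))\in\END_A(M\ot_B A)\ot H$, so $u'(h)$ is rational and $u'(h)\in E$.

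Finally I would verify that the two maps are mutually inverse. For $\tilde\beta_{12}\circ\ol\beta_{12}=\mathrm{id}$, substituting the formula and applying $\phi\ot_B A$ to \equref{1.2.6a} yields $u'(a_{[1]})(m\ot_B a_{[0]})=\phi(m\ot_B a)\ot_B 1$, whence $\tilde\beta_{12}(u')=\phi$. For $\ol\beta_{12}\circ\tilde\beta_{12}=\mathrm{id}$, I start from $\phi(m\ot_B a)\ot_B 1=u'(a_{[1]})(m\ot_B a_{[0]})$, specialize $a=l_i(\ol S(h))$, absorb the trailing factor $r_i(\ol S(h))a$ using $A$-linearity of $u'$, rewrite the coaction of $l_i(\ol S(h))$ via \equref{1.2.4}, contract with $\sum_j l_j r_j=\varepsilon\,1$ from \equref{1.2.5}, and conclude using $S(\ol S(h))=h$ that the result is $u'(h)(m\ot_B a)$. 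I expect the main obstacle to be purely organizational: recognizing that $\ol S$ is unavoidable in the inverse, and then feeding the four translation-map identities \equref{1.2.3}, \equref{1.2.4}, \equref{1.2.5}, \equref{1.2.6a} into the correct slots while keeping the Sweedler indices of the iterated $A$-coaction straight throughout.
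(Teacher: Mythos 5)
Your proposal is correct and follows essentially the same route as the paper: the same coinvariance computation to justify $\eta_M^{-1}$, the same $\ol S$-twisted translation-map formula for the inverse, and the same use of \equref{1.2.3}, \equref{1.2.4}, \equref{1.2.5} and \equref{1.2.6a} to check colinearity and the two composites. The only cosmetic difference is that the paper first builds the inverse as a map $\hat\alpha_{12}$ into $\Cc_E({\bf 2},{\bf 1})$ and then precomposes with $\gamma_{12}^{-1}$ to land in $\Cc'_E({\bf 2},{\bf 1})$, whereas you write the $\ol S$-twisted formula directly and verify the $\Cc'_E$-colinearity condition in one step.
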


\begin{proof}
First, we have to show that $u'(a_{[1]})(m\ot_B a_{[0]})\in (M\ot_B A)^{{\rm co}H}$.
This can be seen as follows:
\begin{eqnarray*}
&&\hspace*{-2cm}
\rho\bigl(u'(a_{[1]})(m\ot_B a_{[0]})\bigr)
=u'(a_{(3)})(m\ot_B a_{[0]})\ot \ol{S}(a_{(2)})a_{[1]}\\
&=& u'(a_{[1]})(m\ot_B a_{[0]})\ot 1.
\end{eqnarray*}
Remark that $\tilde{\beta}_{12}(u')(m\ot_B a)$ is characterized by the formula
\begin{equation}\eqlabel{3.6.0}
\tilde{\beta}_{12}(u')(m\ot_B a)\ot_B 1= u'(a_{[1]})(m\ot_B a_{[0]}).
\end{equation}
Now we show that $\tilde{\beta}_{12}(u')$ is right $B$-linear: for $b\in B$,
we have
\begin{eqnarray*}
&&\hspace*{-2cm}
\tilde{\beta}_{12}(u')(m\ot_B ab)\ot_B 1= u'(a_{[1]})(m\ot_B a_{[0]}b)=
 u'(a_{[1]})(m\ot_B a_{[0]})b\\
 &=&\tilde{\beta}_{12}(u')(m\ot_B a)\ot_B b
=\tilde{\beta}_{12}(u')(m\ot_B a)b\ot_B 1.
\end{eqnarray*}
Now we construct a map
$$\hat{\alpha}_{12}: \Hom_B(M\ot_BA, M)\to \Cc_E({\bf 2},{\bf 1})=\Hom^H(H,E).$$
as follows: 
\begin{equation}\eqlabel{AA}
\bigl(\hat{\alpha}_{12}(\phi)(h)\bigr)(m\ot_Ba)=\sum_i
\phi(m\ot l_i(h))\ot_B r_i(h)a.
\end{equation}
It is clear that $\bigl(\hat{\alpha}_{12}(\phi)\bigr)(h)$ is right $A$-linear. Then we need to show
that $\hat{\alpha}_{12}(\phi)$ is right $H$-colinear. To this end, we need to show that
\begin{equation}\eqlabel{3.6.1}
\rho\bigl(\hat{\alpha}_{12}(\phi)(h)\bigr)= \hat{\alpha}_{12}(\phi)(h_{(1)})\ot h_{(2)},
\end{equation}
for all $h\in H$. For all $m\in M$ and $a\in A$, we compute
\begin{eqnarray*}
&&\hspace*{-2cm}
\rho\Bigr(\bigl(\hat{\alpha}_{12}(\phi)(h)\bigr)(m\ot_B a)\Bigr)\\
&=& \sum_i \phi(m\ot l_i(h))\ot_B r_i(h)_{[0]}a_{[0]}\ot r_i(h)_{[1]}a_{[1]}\\
&\equal{\equref{1.2.3}}&
\sum_i \phi(m\ot l_i(h_{(1)}))\ot_B r_i(h_{(1)})a_{[0]}\ot h_{(2)}a_{[1]}\\
&=& \bigl(\hat{\alpha}_{12}(\phi)(h_{(1)})\bigr)(m\ot_B a_{[0]})\ot h_{(2)}a_{[1]},
\end{eqnarray*}
and \equref{3.6.1} follows as an application of \equref{14}.\\
Now we define $\hat{\beta}_{12}=\hat{\alpha}_{12}\circ \gamma_{12}^{-1}$, and show
that $\hat{\beta}_{12}$ and $\hat{\alpha}_{12}$ are inverses. $\hat{\beta}_{12}$ is
given by the formula
$$(\hat{\beta}_{12}(\phi)(h))(m\ot_B a)=\sum_i\phi(m\ot_B l_i(\ol{S}(h))\ot_B r_i(\ol{S}(h))a.$$
Now we compute
\begin{eqnarray*}
&&\hspace*{-2cm}
\Bigl(\bigl((\hat{\beta}_{12}\circ\tilde{\beta}_{12})(u')\bigr)(h)\Bigr)(m\ot_B a)\\
&=&
\sum_i (\tilde{\beta}_{12}(u'))\bigl(m\ot_B l_i(\ol{S}(h)\bigr)\ot_B r_i(\ol{S}(h))a\\
&=& (u'(l_i(\ol{S}(h)_{[1]}))\bigl(m\ot_B l_i(\ol{S}(h)_{[0]}\bigr)r_i(\ol{S}(h)a\\
&\equal{\equref{1.2.4}}&
(u'(S(\ol{S}(h_{(2)}))))\bigl(m\ot_B l_i(\ol{S}(h_{(1)})\bigr)r_i(\ol{S}(h_{(1)})a\\
&=&
(u'(h_{(2)}))\bigl(m\ot_B l_i(\ol{S}(h_{(1)})r_i(\ol{S}(h_{(1)})a\bigr)\\
&\equal{\equref{1.2.5}}& u'(h)(m\ot_B a);\\
&&\hspace*{-2cm}
\bigl((\tilde{\beta}_{12}\circ\hat{\beta}_{12})(\phi)\bigr)(m\ot_B a)\ot_B 1\\
&=&(\hat{\beta}_{12}(\phi))(a_{[1]})(m\ot_B a_{[0]})\\
&=& \sum_i\phi(m\ot_B l_i(\ol{S}(a_{[1]})))\ot_B r_i(\ol{S}(a_{[1]}))a_{[0]}\\
&\equal{\equref{1.2.6a}}& \phi(m\ot_B a)\ot_B 1.
\end{eqnarray*}
\end{proof}

\begin{corollary}\colabel{3.7}
We have $k$-module isomorphisms
$$\beta_{12}=\delta_1\circ \tilde{\beta}_{12}:\ \Cc'_E({\bf 2},{\bf 1})\to \Hom_B^H(M\ot_B A,M\ot H));$$
$$\alpha_{12}=\delta_1\circ \tilde{\beta}_{12}\circ \gamma_{12}^{-1}:\ \Cc_E({\bf 2},{\bf 1})\to \Hom_B^H(M\ot_B A,M\ot H)).$$
\end{corollary}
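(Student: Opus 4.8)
The plan is to observe that \coref{3.7} is a purely formal consequence of the two preceding Lemmas together with \prref{2.1}: I would simply display $\beta_{12}$ and $\alpha_{12}$ as composites of maps each of which is already known to be a $k$-module isomorphism, and then invoke the fact that a composite of isomorphisms is an isomorphism. No new computation is needed; the content lies entirely in identifying the correct domains and codomains.

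First I would treat $\beta_{12}$. By \leref{3.6}, the map $\tilde{\beta}_{12}:\ \Cc'_E({\bf 2},{\bf 1})\to \Hom_B(M\ot_BA,M)$ is an isomorphism of $k$-modules, and by \leref{3.2}, the map $\delta_1:\ \Hom_B(M\ot_B A,M)\to \Hom_B^H(M\ot_B A,M\ot H)$ is an isomorphism of $k$-modules, with explicit inverse $\ol{\delta}_1=(M\ot\varepsilon)\circ(-)$. Hence $\beta_{12}=\delta_1\circ\tilde{\beta}_{12}$ is a composite of $k$-module isomorphisms, so it is itself a $k$-module isomorphism, with inverse $\tilde{\beta}_{12}^{-1}\circ\ol{\delta}_1$.

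Next I would treat $\alpha_{12}$. The key point is that \prref{2.1} applies verbatim with $A$ replaced by the right $H$-comodule algebra $E=\END_A(M\ot_BA)$ (recall from the start of \seref{3} that $E$ is indeed a right $H$-comodule algebra), so the functor $\gamma:\ \Cc'_E\to\Cc_E$ is an isomorphism of categories. In particular its component $\gamma_{12}:\ \Cc'_E({\bf 2},{\bf 1})\to \Cc_E({\bf 2},{\bf 1})$, $f'\mapsto f'\circ S$, is a bijection, and since precomposition with $S$ is $k$-linear it is a $k$-module isomorphism, with inverse $f\mapsto f\circ\ol{S}$. Therefore $\alpha_{12}=\beta_{12}\circ\gamma_{12}^{-1}=\delta_1\circ\tilde{\beta}_{12}\circ\gamma_{12}^{-1}$ is once more a composite of $k$-module isomorphisms, hence a $k$-module isomorphism.

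I do not expect a genuine obstacle here: all of the substantive work has already been absorbed into \leref{3.6} (where the inverse of $\tilde{\beta}_{12}$ is built from the translation map $\gamma_A$ and the identities \equref{1.2.4}, \equref{1.2.5} and \equref{1.2.6a}) and into \leref{3.2}. The only thing requiring care is the bookkeeping of domains and codomains, in particular matching the paper's indexing convention $\gamma_{ji}:\ \Cc'_A({\bf i},{\bf j})\to\Cc_A({\bf i},{\bf j})$, so that $\gamma_{12}$ is read as the map $\Cc'_E({\bf 2},{\bf 1})\to\Cc_E({\bf 2},{\bf 1})$; this is exactly the source and target needed in order to precompose $\delta_1\circ\tilde{\beta}_{12}$ with $\gamma_{12}^{-1}$ and land in $\Hom_B^H(M\ot_B A,M\ot H)$.
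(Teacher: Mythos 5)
Your proposal is correct and is exactly the argument the paper intends: \coref{3.7} is stated without proof precisely because it is the composite of the isomorphisms $\delta_1$ from \leref{3.2}, $\tilde{\beta}_{12}$ from \leref{3.6}, and $\gamma_{12}^{-1}$ from \prref{2.1} applied to $E$. Your bookkeeping of the indexing convention $\gamma_{ji}:\ \Cc'_E({\bf i},{\bf j})\to\Cc_E({\bf i},{\bf j})$ matches the paper's.
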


\begin{lemma}\lelabel{3.8}
We have an algebra isomorphism $\beta_{22}:\ \Cc'_E({\bf 2},{\bf 2})\to \End_B^H(M\ot_B A)$, given by
the formula
$$(\beta_{22}(w'))(p)=w'(p_{[1]})(p_{[0]}),$$
for all $p\in M\ot_B A$. Consequently, we also have an algebra isomorphism
$\alpha_{22}=\beta_{22}\circ \gamma_{22}^{-1}:\ \Cc'_E({\bf 2},{\bf 1})\to \End_B^H(M\ot_B A)$.
\end{lemma}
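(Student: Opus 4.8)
The plan is to follow the pattern of \leref{3.6}: show that $\beta_{22}$ is well defined, produce an explicit two-sided inverse built from the Galois data $l_i,r_i$, and finally check that $\beta_{22}$ carries the product $\star$ on $\Cc'_E({\bf 2},{\bf 2})$ to composition in $\End_B^H(M\ot_B A)$. The closing assertion about $\alpha_{22}=\beta_{22}\circ\gamma_{22}^{-1}$ is then automatic, since $\gamma_{22}$ is an algebra isomorphism by \prref{2.1}.

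First I would verify that $\beta_{22}(w')$ lies in $\End_B^H(M\ot_B A)$, i.e. that it is right $B$-linear and right $H$-colinear. Right $B$-linearity is immediate: for $b\in B$ the coaction on $M\ot_B A$ reads $\rho((m\ot_B a)b)=(m\ot_B a_{[0]}b)\ot a_{[1]}$, and since each $w'(h)\in E=\END_A(M\ot_B A)$ is right $A$-linear,
\[
\beta_{22}(w')((m\ot_B a)b)=w'(a_{[1]})(m\ot_B a_{[0]}b)=w'(a_{[1]})(m\ot_B a_{[0]})b=\beta_{22}(w')(m\ot_B a)b.
\]
The real point is right $H$-colinearity. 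Writing $a_{[0]}\ot a_{[1]}\ot\cdots$ for the iterated coaction of $a$, I would apply $\rho$ to $\beta_{22}(w')(m\ot_B a)=w'(a_{[1]})(m\ot_B a_{[0]})$ and use \equref{14} together with the defining relation $\rho(w'(h))=w'(h_{(2)})\ot h_{(3)}\ol{S}(h_{(1)})$ of $\Cc'_E({\bf 2},{\bf 2})$; one coassociativity step then gives
\[
\rho\bigl(\beta_{22}(w')(m\ot_B a)\bigr)=w'(a_{[3]})(m\ot_B a_{[0]})\ot a_{[4]}\,\ol{S}(a_{[2]})\,a_{[1]}.
\]
The antipode identity $\ol{S}(h_{(2)})h_{(1)}=\varepsilon(h)1$ now collapses the innermost factors $\ol{S}(a_{[2]})a_{[1]}$, leaving $\beta_{22}(w')(m\ot_B a_{[0]})\ot a_{[1]}$, which is precisely right $H$-colinearity. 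I expect this contraction to be the step that explains why the particular coaction defining $\Cc'_E({\bf 2},{\bf 2})$ is the right one.

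For the inverse I would imitate $\hat\alpha_{12}$ of \leref{3.6}, setting for $\Phi\in\End_B^H(M\ot_B A)$
\[
\bigl(\hat\alpha_{22}(\Phi)(h)\bigr)(m\ot_B a)=\sum_i\Phi(m\ot_B l_i(h))\,r_i(h)\,a,
\]
which should land in $\Cc_E({\bf 2},{\bf 2})$, and its companion $\ol\beta_{22}=\gamma_{22}^{-1}\circ\hat\alpha_{22}\colon\End_B^H(M\ot_B A)\to\Cc'_E({\bf 2},{\bf 2})$, given by the same expression with $\ol{S}(h)$ in place of $h$. Right $A$-linearity of $\hat\alpha_{22}(\Phi)(h)$ is clear. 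To see that it has the $\Cc_E({\bf 2},{\bf 2})$-coaction one must coact on both Galois legs at once; combining \equref{1.2.3} and \equref{1.2.4} yields
\[
\sum_i l_i(h)_{[0]}\ot_B r_i(h)_{[0]}\ot l_i(h)_{[1]}\ot r_i(h)_{[1]}=\sum_i l_i(h_{(2)})\ot_B r_i(h_{(2)})\ot S(h_{(1)})\ot h_{(3)},
\]
and inserting this, the colinearity of $\Phi$, and \equref{14} into $\rho(\hat\alpha_{22}(\Phi)(h)(m\ot_B a))$ returns exactly $\hat\alpha_{22}(\Phi)(h_{(2)})\ot S(h_{(1)})h_{(3)}$. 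That $\beta_{22}$ and $\ol\beta_{22}$ are mutually inverse then reduces to two Galois computations: one direction uses \equref{1.2.4} to rewrite the coaction on $l_i$ and then \equref{1.2.5} to contract $l_ir_i$; for $\beta_{22}\circ\ol\beta_{22}=\Id$ I would observe that $x\ot_B y\mapsto\Phi(m\ot_B x)y$ is a well-defined ($B$-balanced) map on $A\ot_B A$ and apply it to the identity \equref{1.2.6a}.

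Finally, the algebra-map property drops out of the colinearity already established: expanding $(w'\star w_1')(h)=w'(h_{(2)})w_1'(h_{(1)})$ gives
\[
\beta_{22}(w'\star w_1')(p)=w'\bigl((p_{[1]})_{(2)}\bigr)\Bigl(w_1'\bigl((p_{[1]})_{(1)}\bigr)(p_{[0]})\Bigr),
\]
whose inner piece is $\beta_{22}(w_1')(p)$; computing the coaction of $\beta_{22}(w_1')(p)$ by colinearity and substituting back identifies this with $\beta_{22}(w')\bigl(\beta_{22}(w_1')(p)\bigr)$. Thus $\beta_{22}$ is an algebra isomorphism, and the statement for $\alpha_{22}=\beta_{22}\circ\gamma_{22}^{-1}$ follows formally. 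The two colinearity verifications --- the antipode contraction for $\beta_{22}$ and the double-leg Galois relation for $\hat\alpha_{22}$ --- are the only genuine obstacles; the rest is bookkeeping.
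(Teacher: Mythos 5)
Your proposal is correct and follows essentially the same route as the paper: the same antipode contraction $\ol{S}(p_{[2]})p_{[1]}=\varepsilon(p_{[1]})1$ for colinearity of $\beta_{22}(w')$, the same Galois-data inverse $\sum_i\Phi(m\ot_B l_i(h))r_i(h)a$ verified via \equref{1.2.3}, \equref{1.2.4}, \equref{1.2.5} and \equref{1.2.6a}, and the same use of colinearity of $\beta_{22}(w'_1)$ to get multiplicativity. The only item you leave implicit is the (trivial) check that $\beta_{22}$ sends the unit $\eta_E\circ\varepsilon_H$ of $\Cc'_E({\bf 2},{\bf 2})$ to the identity map, which the paper records in one line.
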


\begin{proof}
We first show that $\beta_{22}(w')$ is right $B$-linear. For $p\in M\ot_B A$ and $b\in B$, we have
$\rho(pb)=p_{[0]}b\ot p_{[1]}$, and
$$(\beta_{22}(w'))(pb)=w'(p_{[1]})(p_{[0]}b)=w'(p_{[1]})(p_{[0]})b.$$
$\beta_{22}(w')$ is right $H$-co-linear. Since $w'\in \Cc'_E({\bf 2},{\bf 1})$, we have
$$\rho(w'(h))=w(h_{(2)})\ot h_{(3)}\ol{S}(h_{(1)}),$$
hence
\begin{equation}\eqlabel{3.8.1}
\rho(w'(h)(p))=w(h_{(2)})(p_{[0]})\ot h_{(3)}\ol{S}(h_{(1)})p_{[1]}.
\end{equation}
Now we have
\begin{eqnarray*}
&&\hspace*{-2cm}
\rho\bigl((\beta_{22}(w'))(p)\bigr)=\rho\bigl(w'(p_{[1]})(p_{[0]})\bigr)
\equal{\equref{3.8.1}}
w'(p_{(3)})(p_{[0]})\ot p_{[4]}\ol{S}(p_{[2]})p_{[1]}\\
&=&w'(p_{(1)})(p_{[0]})\ot p_{[2]}= (\beta_{22}(w'))(p_{[0]})\ot p_{[1]}.
\end{eqnarray*}
We next show that $\beta_{22}$ is an algebra morphism, that is, it preserves multiplication and unit.
Mulitplication:
\begin{eqnarray*}
&&\hspace*{-2cm}
(\beta_{22}(w'\star w'_1))(p)= ((w'\star w'_1)(p_{[1]})(p_{[0]})\\
&=& (w'(p_{[2]})\circ w'_{[1]}(p_{[1]}))(p_{[0]})\\
&=& w'(p_{[1]})\bigl(\beta_{22}(w'_1)(p_{[0]})\bigr)\\
&=& w'\Bigl(\bigl(\beta_{22}(w'_1)(p)\bigr)_{[1]}\Bigr) \Bigl(\bigl(\beta_{22}(w'_1)(p)\bigr)_{[0]}\Bigr) \\
&=& \beta_{22}(w')\bigl( \beta_{22}(w'_1)(p)\bigr)\\
&=& (\beta_{22}(w')\circ \beta_{22}(w'_{1}))(p).
\end{eqnarray*}
In the fourth equality we used the fact that $\beta_{22}(w'_1)$ is right $H$-colinear.\\
Unit: $(\beta_{22}(\eta_E\circ \varepsilon_H))(p)=(\eta(\varepsilon(p_{[1]}))(p_{[0]})=p$.\\

Now we consider the map
$$\ol{\alpha}_{22}:\ \End_B^H(M\ot_B A)\to \Cc_E({\bf 2},{\bf 2}),$$
defined as follows: for $\kappa\in \End_B^H(M\ot_B A)$, let
$$\bigl(\ol{\alpha}_{22}(\kappa)(h)\bigr)(m\ot_B a)
=\sum_i \kappa(m\ot_B l_i(h))r_i(h)a.$$
We have to show that $\ol{\alpha}_{12}(\kappa)\in \Cc_E({\bf 2},{\bf 2})$, that is,
\begin{equation}\eqlabel{3.8.2}
\rho\bigl(\ol{\alpha}_{12}(\kappa)(h)\bigr)=
\bigl(\ol{\alpha}_{12}(\kappa)(h_{(2)})\bigr)\ot S(h_{(1)})h_{(3)}.
\end{equation}
We proceed as follows: for all $m\in M$ and $a\in A$, we have
\begin{eqnarray*}
&&\hspace*{-2cm}
\rho\bigl(\ol{\alpha}_{12}(\kappa)(h)(m\ot_B a\bigr)
= \rho\bigl( \sum_i \kappa(m\ot_B l_i(h))r_i(h)a\bigr)\\
&=& \sum_i \kappa\bigl(m\ot_B l_i(h)_{[0]}\bigr)r_i(h)_{[0]}a_{[0]} \ot 
l_i(h)_{[1]}r_i(h)_{[1]}a_{[1]} \\
&\equal{\equref{1.2.3}}&
\sum_i \kappa\bigl(m\ot_B l_i(h_{(1)})_{[0]}\bigr)r_i(h_{(1)})a_{[0]} \ot 
l_i(h_{(1)})_{[1]}h_{(2)}a_{[1]} \\
&\equal{\equref{1.2.4}}&
\sum_i \kappa\bigl(m\ot_B l_i(h_{(2)})\bigr)r_i(h_{(2)})a_{[0]} \ot 
S(h_{(1)})h_{(3)}a_{[1]} \\
&=& \ol{\alpha}_{22}(\kappa)(h_{(2)})(m\ot_B a_{[0]})\ot S(h_{(1)})h_{(3)}a_{[1]} 
\end{eqnarray*}
In the second equality, we used that $\kappa$ is right $H$-colinear.
\equref{3.8.2} then follows as an application of \equref{14}. Let us now show
that $\ol{\beta}_{22}=\ol{\alpha}_{12}\circ \gamma_{22}^{-1}$ and $\beta_{22}$
are inverses.
\begin{eqnarray*}
&&\hspace*{-2cm}
\bigl((\beta_{22}\circ \ol{\beta}_{22})(\kappa)\bigr)(m\ot_B a)=
(\ol{\beta}_{22}(\kappa)(a_{[1]}))(m\ot_B a_{[0]})\\
&=& \kappa\bigl( m\ot_B l_i(\ol{S}(a_{[1]}))r_i(\ol{S}(a_{[1]})a_{[0]}\equal{\equref{1.2.6a}}
\kappa(m\ot_B a);\\
&&\hspace*{-2cm}
\bigl(((\ol{\beta}_{22}\circ {\beta}_{22})(w'))(h)\bigr)(m\ot_B a)\\
&=&\sum_i \beta_{22}(w')(m\ot_B l_i(\ol{S}(h)))r_i(\ol{S}(h))a\\
&=& \sum_i \bigl(w'\bigl(l_i(\ol{S}(h))_{[1]}\bigr)\bigr)\bigl(m\o_B l_i(\ol{S}(h))_{[0]}\bigr)
r_i(\ol{S}(h))a\\
&\equal{\equref{1.2.4}}&
\sum_i w'\bigl( S(\ol{S}(h_{(2)})\bigr) \bigl(m\ot_B l_i(\ol{S}(h_{(1)})\bigr)r_i(\ol{S}(h_{(1)})a\\
&=& \sum_i w'(h_{(2)})\bigl(m\ot_B l_i(\ol{S}(h_{(1)})r_i(\ol{S}(h_{(1)})a\bigr)\\
&\equal{\equref{1.2.5}}& w'(h)(m\ot_B a).
\end{eqnarray*}
\end{proof}

\begin{proof} ({\sl of \thref{3.1}})
In the preceding Lemmas, we have shown that there exist isomorphisms
$$\xymatrix{
\Cc'_E({\bf i},{\bf j})\ar[r]^{\gamma_{ji}}&
\Cc_E({\bf i},{\bf j})\ar[r]^(.35){\alpha_{ji}}&
\Hom_B^H(\alpha({\bf i}),\alpha({\bf j})}$$
The proof of \thref{3.1} will be finished if we can show that, given
$f:\ {\bf i}\to {\bf j}$ and $g:\ {\bf j}\to {\bf k}$ in $\Cc_E$, we have
\begin{equation}\eqlabel{3.9.1}
\alpha_{kj}(g)\circ\alpha_{ji}(f)=\alpha_{ki}(g*f)
\end{equation}
We already know that \equref{3.9.1} holds if ${\bf i}= {\bf j}= {\bf k}$, see \coref{3.4}
and \leref{3.8}.\\
We now fix the following notation.
$$\begin{array}{ccc}
v'\in \Cc'_E({\bf 1},{\bf 1})& v=\gamma_{11}(v')\in \Cc_E({\bf 1},{\bf 1})&
\theta=\alpha_{11}(v):\ M\ot H\to M\ot H\\
t'\in \Cc'_E({\bf 1},{\bf 2})& u=\gamma_{21}(t')\in \Cc_E({\bf 1},{\bf 2})&
\psi=\alpha_{21}(v):\ M\ot H\to M\ot_BA\\
u'\in \Cc'_E({\bf 2},{\bf 1})& t=\gamma_{12}(u')\in \Cc_E({\bf 2},{\bf 1})&
\varphi=\alpha_{12}(v):\ M\ot_B A\to M\ot H\\
w'\in \Cc'_E({\bf 2},{\bf 2})& w=\gamma_{22}(w')\in \Cc_E({\bf 2},{\bf 2})&
\kappa=\alpha_{22}(w):\ M\ot_B A\to M\ot_BA
\end{array}$$
Furthermore, let $\Theta=\ol{\delta}_2(\theta)$ and $\phi=\ol{\delta}_1(\varphi)$,
see \leref{3.2}. The six remaining identities that we have to prove are
\begin{eqnarray}
\alpha_{21}(v*u)&=&\alpha_{21}(u)\circ\alpha_{11}(v)=\psi\circ\theta;\eqlabel{21a}\\
\alpha_{21}(w*u)&=&\alpha_{22}(w)\circ\alpha_{21}(u)=\kappa\circ\psi;\eqlabel{21b}\\
\alpha_{11}(t*u)&=&\alpha_{12}(t)\circ\alpha_{21}(u)=\varphi\circ\psi;\eqlabel{11}\\
\alpha_{12}(t*w)&=&\alpha_{12}(t)\circ\alpha_{22}(w)=\varphi\circ\kappa;\eqlabel{12a}\\
\alpha_{12}(v*t)&=&\alpha_{11}(v)\circ\alpha_{12}(t)=\theta\circ\varphi;\eqlabel{12b}\\
\alpha_{22}(u*t)&=&\alpha_{21}(u)\circ\alpha_{12}(t)=\psi\circ\varphi;\eqlabel{22}.
\end{eqnarray}

\equref{21a} is equivalent to $\ol{\beta}_{21}(\psi\circ\theta)=t'\star v'$. This can be shown
as follows
\begin{eqnarray*}
&&\hspace*{-2cm}
((t'\star v')(h))(m\ot_B a)=
(t'(h_{(2)})\circ v'(h_{(1)}))(m\ot_B a)\\
&=& (t'(h_{(2)}))\bigl( \Theta(m\ot h_{(1)})\ot_B a\bigr)\\
&=& \psi\bigl(\Theta(m\ot h_{(1)})\ot h_{(2)}\bigr)a\\
&=& (\psi\circ \theta)(m\ot h)a\\
&=& \bigl(\ol{\beta}_{21}(\psi\circ \theta)\bigr)(m\ot_B a).
\end{eqnarray*}

\equref{21b} is equivalent to $\beta_{21}(w'\star t')=\kappa\circ\psi$.\\
$\psi$ is given by the formula (see \leref{3.5}):
$$\psi(m\ot h)=t'(h)(m\ot_B 1).$$
$t'$ is right $H$-colinear, hence $\rho(t'(h))=t'(h_{(1)})\ot h_{(2)}$, and
$$\rho(\psi(m\ot h))=t'(h_{(1)})(m\ot_B 1)\ot h_{(2)}.$$
Then we have
\begin{eqnarray*}
&&\hspace*{-2cm}
(\kappa\circ\psi)(m\ot h)=
(w'(\psi(m\ot h)_{[1]}))(\psi(m\ot h)_{[0]})\\
&=&(w'(h_{(2)}))\bigl(t'(h_{(1)})(m\ot_B 1)\bigr)\\
&=& ((w'\star t')(h))(m\ot_B 1)=\beta_{21}(w'\star t')(m\ot h).
\end{eqnarray*}

\equref{11} is equivalent to $\ol{\beta}_{11}(\varphi\circ\psi)=u'\star t'$.\\
First observe that
\begin{eqnarray*}
&&\hspace*{-2cm}
(\ol{\beta}_{11}(\varphi\circ\psi)(h))(m\ot_B a)=
((M\ot \varepsilon)\circ\varphi\circ\psi)(m\ot h)\ot_B a\\
&=& (\phi\circ\psi)(m\ot h)\ot_B a.
\end{eqnarray*}
Now write
$$\psi(m\ot h)=\sum_j m_j\ot_N a_j.$$
Since $\psi$ is right $H$-colinear, we have
\begin{equation}\eqlabel{3.9.2}
\psi(m\ot h_{(1)})\ot h_{(2)}=\sum_j (m_j\ot_N a_{j[0]})\ot_N a_{j[1]}.
\end{equation}
Then we compute
\begin{eqnarray*}
&&\hspace*{-2cm}
\bigl((u'\star t')(h)\bigr)(m\ot_B a)=
\bigl(u'(h_{(2)})\circ t'(h_{(1)})\bigr)(m\ot_B a)\\
&=&
u'(h_{(2)})\bigl(\psi(m\ot h_{(1)})a\bigr)\\
&\equal{\equref{3.9.2}}&
\sum_j u'(a_{j[1]})\bigl(\psi(m_j\ot_N a_{j[0]})a\bigr)\\
&=&
\sum_{i,j} \phi\bigl(m_j \ot_B l_i(\ol{S}(a_{j[1]}))\bigr)\ot_B r_i(\ol{S}(a_{j[1]}))a_{j[0]}a\\
&\equal{\equref{1.2.6a}}&
\sum_j \phi(m_j\ot_B a_j)\ot_B a=(\phi\circ\psi)(m\ot h)\ot_B a.
\end{eqnarray*}

\equref{12a} is equivalent to $\beta_{12}(u'\star w')=\varphi\circ\kappa$.\\
We apply \leref{3.8} and write
$$\kappa(m\ot_B a)=w'(a_{[1]})(m\ot_B a_{[0]})=\sum_j m_j\ot_B a_j.$$
Since $\kappa$ is right $H$-colinear, we have
\begin{equation}\eqlabel{3.9.3}
\kappa(m\ot_B a_{[0]})\ot a_{[1]}=\sum_j (m_j\ot_Ba_{j[0]})\ot a_{j[1]}.
\end{equation}
Recall from \leref{3.6} that
$\phi(m\ot_B a)\ot_B 1=u'(a_{[1]})(m\ot_B a_{[0]})$. Then we compute
\begin{eqnarray*}
&&\hspace*{-2cm}
((M\ot \varepsilon)\circ \varphi \circ\kappa)(m\ot_B a)\ot_B 1=
(\phi\circ\kappa)(m\ot_B a)\ot_B 1\\
&=&\sum_j u'(a_{j[1]})(m\ot_B a_{j[0]})\\
&\equal{\equref{3.9.3}}&
u'(a_{[1]})(\kappa(m\ot_B a_{[0]}))\\
&=& \bigl(u'(a_{[2]})\circ w'(a_{[1]})\bigr)(m\ot_B a_{[0]})\\
&=&\bigl((u'\star w')(a_{[1]})\bigr)(m\ot_B a_{[0]})\\
&=& \bigl((M\ot \varepsilon)\circ \beta_{12}(u'\star w')\bigr)(m\ot_B a).
\end{eqnarray*}
It follows that
$\ol{\delta}_1(\varphi\circ\kappa)=(M\ot \varepsilon)\circ \varphi \circ\kappa=
(M\ot \varepsilon)\circ \beta_{12}(u'\star w')=\ol{\delta}_1(\beta_{12}(u'\star w'))$.
and then $\varphi\circ\kappa=\beta_{12}(u'\star w')$.\\

\equref{12b} is equivalent to $v*t=\ol{\alpha}_{12}(\theta\circ\varphi)$. Recall
from \equref{AA} that
$$(t(h))(m\ot_Ba)=\sum_i \phi(m\ot l_i(h))\ot_B r_i(h)a,$$
and from \leref{3.3} that
$$(v(h))(m\ot_Ba)=(v'(S(h)))(m\ot_Ba)=\Theta(m\ot S(h))\ot_B a.$$
Then we compute
\begin{eqnarray*}
&&\hspace*{-2cm}
((v* t)(h))(m\ot_B a)=
\bigl( v(h_{(1)}\circ v(h_{(2)})\bigr)(m\ot_B a)\\
&=& v(h_{(1)})\bigl( \sum_i \phi(m\ot l_i(h_{(2)}))\ot_B r_i(h_{(2)})a\bigr)\\
&=& \sum_i \Theta\bigl(\phi(m\ot l_i(h_{(2)}))\ot S(h_{(1)})\bigr) \ot_B r_i(h_{(2)})a\\
&\equal{\equref{1.2.4}}&
\sum_i \Theta\bigl(\phi(m\ot l_i(h)_{[0]})\ot l_i(h)_{[1]}\bigr) \ot_B r_i(h)a\\
&=& \sum_i \Theta(\varphi(m\ot l_i(h)) \ot_B r_i(h)a\\
&=& \sum_i ((M\ot \varepsilon)\circ \theta\circ \varphi)(m\ot l_i(h)) \ot_B r_i(h)a\\
&\equal{\equref{AA}}& (\alpha_{12}^{-1}(\theta\circ \varphi))(m\ot_B a).
\end{eqnarray*}

Finally, \equref{22} is equivalent to
$\beta_{22}(t'\star u')=\psi\circ\varphi$. From \leref{3.5}, we have that
$\psi(m\ot h)=t'(h)(m\ot_B 1)$, and from \leref{3.6} that
$\phi(m\ot_B a)\ot_B 1=u'(a_{[1]})(m\ot_B a_{[0]})$, hence
\begin{eqnarray*}
&&\hspace*{-2cm}
(\psi\circ\varphi)(m\ot_B a)=
\psi\bigl(\phi(m\ot_B a_{[0]})\ot a_{[1]}\bigr)\\
&=&
t'(a_{[1]})\bigl(\phi(m\ot_B a_{[0]})\ot_B 1\bigr)\\
&=&
(t'(a_{[2]})\circ u'(a_{[1]}))(m\ot_B a_{[0]})\\
&=&
((t'\star u')(a_{[1]}))(m\ot_B a_{[0]})\\
&=&
(\beta_{22}(t'\star u'))(m\ot_B a).
\end{eqnarray*}
\end{proof}

\section{The left-right case}\selabel{4}
Assume that $H$ is projective as a $k$-module.
Assume that $A$ is a left faithfully flat $H$-Galois extension of $B$, that is,
$A$ satisfies conditions (4) and (5) of \thref{1.1}. A left $A$-linear map between
left=right $(A,H)$-modules is called rational if there exists a (unique)
$f_{[0]}\ot f_{[1]}\in {}_A\Hom(P,Q)\ot H$ such that 
$\rho(f(p))=f_{[0]}(p_{[0]})\ot p_{[1]}f_{[1]}$. ${}_A\HOM(P,Q)$, the submodule of
rational maps is a right $H$-comodule and ${}_A\END(P)^{\rm op}$
is a right $H$-comodule algebra.\\
Now take $M\in {}_B\Mm$, and let $E={}_A\END(A\ot_BM)^{\rm op}$. Then
$F=E^{{\rm co}H}={}_A\End^H(A\ot_B M)^{\rm op}\cong {}_B\End(M)^{\rm op}$.
Let $\Ee_M$ be the full subcategory of ${}_B\Mm^H$ with objects $B\ot H$ and
$A\ot_B M$.

\begin{theorem}\thlabel{4.1}
With notation and assumptions as above, we have a duality
$\alpha:\ \Cc_E\to \Ee_M$.
\end{theorem}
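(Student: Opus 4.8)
The plan is to obtain \thref{4.1} by dualizing the argument that established \thref{3.1}, exploiting the symmetry between the right-handed and left-right settings already recorded in \seref{1}. Recall that \thref{1.1} gives conditions $(4)$--$(6)$ as the left-right mirror of $(1)$--$(3)$, and that the identities \equref{1.2.1}--\equref{1.2.7} have left-right analogues obtained from $\gamma_A$ via the map $\Phi$. First I would set up the left-right inverse canonical map $\gamma'_A=(\can')^{-1}\circ(\eta'_A\ot H):\ H\to A\ot_B A$, writing $\gamma'_A(h)=\sum_i l'_i(h)\ot_B r'_i(h)$, and record the mirror-image relations it satisfies; these are exactly the formulas \equref{1.2.1}--\equref{1.2.7} with the roles of left and right interchanged and $S$ replaced by $\ol S$ where appropriate. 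Because $\can$ is an isomorphism if and only if $\can'$ is, the hypothesis that $A$ is a left faithfully flat $H$-Galois extension supplies everything needed.

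Next I would define, for each pair of objects, the $k$-module maps $\alpha_{ji}:\ \Cc_E({\bf i},{\bf j})\to {}_B\Hom^H(\alpha({\bf i}),\alpha({\bf j}))$ in direct analogy with Lemmas \ref{le:3.3}, \ref{le:3.5}, \ref{le:3.6} and \ref{le:3.8}, using $A\ot_B M$ in place of $M\ot_B A$ and left module/comodule structures throughout. Here $E={}_A\END(A\ot_B M)^{\rm op}$ is a right $H$-comodule algebra by the construction recalled just above the statement, and its coinvariants $F\cong {}_B\End(M)^{\rm op}$ play the role that $\End_B(M)$ played before. The construction of each $\alpha_{ji}$ and of its inverse is formally the same as in \seref{3}; the only genuine computations are the verifications that $\alpha_{ji}(f)$ is left $A$-linear and right $H$-colinear, and that the appropriate coinvariance holds, and these are the exact left-right transcriptions of the computations already carried out, now invoking the mirrored versions of \equref{1.2.3}, \equref{1.2.4}, \equref{1.2.5} and \equref{1.2.6a}.

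The one structural difference, and the point I expect to be the main obstacle, is that \thref{4.1} asserts a \emph{duality} rather than an isomorphism of categories: the functor $\alpha$ must reverse the direction of composition. This is forced by the presence of $(-)^{\rm op}$ in the definitions of $E$ and $F$, so that a morphism ${\bf i}\to{\bf j}$ in $\Cc_E$ corresponds to a morphism $\alpha({\bf j})\to\alpha({\bf i})$ in $\Ee_M$. Concretely, the compatibility to be checked is
\begin{equation}\eqlabel{4.1.1}
\alpha_{ji}(f)\circ\alpha_{kj}(g)=\alpha_{ki}(g*f),
\end{equation}
with the two factors composed in the opposite order compared to \equref{3.9.1}. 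I would verify \equref{4.1.1} case by case across the six off-diagonal pairs, mirroring \equref{21a}--\equref{22}; the diagonal cases follow because each $\alpha_{ii}$ is an algebra anti-isomorphism onto the relevant opposite endomorphism algebra. Care must be taken to keep the convolution on $\Hom(H,E)$ consistent with the opposite multiplication on $E$, since it is precisely this bookkeeping that turns the covariant functoriality of \seref{3} into the contravariant statement required here. Once \equref{4.1.1} is established in all cases, the collection $\{\alpha_{ji}\}$ assembles into the asserted duality $\alpha:\ \Cc_E\to\Ee_M$, completing the proof.
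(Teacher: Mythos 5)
Your proposal is correct and follows essentially the same route as the paper: the paper's proof of \thref{4.1} simply writes down the left-right mirror images of the maps $\tilde\alpha_{11},\alpha_{12},\tilde\alpha_{21},\alpha_{22}$ and their inverses (with arrows reversed, so $\alpha_{ji}$ lands in $\Ee_M(\alpha({\bf j}),\alpha({\bf i}))$) and leaves the remaining verifications to the reader as transcriptions of the computations in \seref{3}. Your explicit identification of the contravariant compatibility $\alpha_{ji}(f)\circ\alpha_{kj}(g)=\alpha_{ki}(g*f)$ forced by the $(-)^{\rm op}$ in $E={}_A\END(A\ot_BM)^{\rm op}$ is exactly the bookkeeping the paper leaves implicit; just make sure the codomain of $\alpha_{ji}$ stated in your second paragraph is corrected to the reversed direction you use in your third.
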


\begin{proof}
Let $\alpha({\bf 1})=M\ot H$ and $\alpha({\bf 2})=A\ot_B M$. Below we present
the descriptions of the maps $\alpha_{ji}:\ \Cc_E({\bf i},{\bf j})\to \Dd_M({\bf j},{\bf i})$
and their inverses $\ol{\alpha}_{ji}$. All the other verifications are similar to corresponding
arguments in the proof of \thref{3.1} and are left to the reader.
Observe that we have
two natural isomorphisms
$$\delta_1:\  {}_B\Hom(A\ot_B M,M)\to {}_B\Hom^H(A\ot_B M, M\ot H);$$
$$\delta_2:\ {}_B\Hom(M\ot H,M)\to {}_B\End^H(M\ot H)$$
defined as follows:
$$\delta_1(\phi)(a\ot_B m)=\phi(a_{[0]}\ot_B m)\ot a_{[1]}~~;~~
\ol{\delta}_1(\varphi)=(M\ot\varepsilon)\circ\varphi;$$
$$\delta_2(\Theta)(m\ot h)=
\Theta(m\ot h_{(1)})\ot h_{(2)}~~;~~
\ol{\delta}_2(\theta)=(M\ot\varepsilon)\circ\theta.$$
We have an isomorphism
$$\tilde{\alpha}_{ 11}:\ \Cc_E({\bf 1},{\bf 1})=\Hom(H,E^{{\rm co}H})\to {}_B\Hom(M\ot H,M),$$
given by the formulas
\begin{eqnarray*}
&&1\ot_B \tilde{\alpha}_{ 11}(v)(m\ot h)=v(h)(1\ot_B m);\\
&&\hat{\alpha}_{ 11}(\Theta)(h)(a\ot_B m)=a\ot_B \Theta(m\ot h).
\end{eqnarray*}
We then define $\alpha_{ 11}=\beta_2\circ \tilde{\alpha}_{\bf 11}$.\\
The isomorphism
 \[\alpha_{ 12}:\ \Cc_E({\bf 2},{\bf 1})=\Hom^H(H,E)\to {}_B\Hom^H(M\ot
H,A\ot_B M)\]
is given by the formulas
$$\alpha_{ 12}(t)(m\ot h)=t(h)(1\ot_Bm)~~;~~(\ol{\alpha}_{ 12}(\psi)(h))(a\ot_B m)=
a\psi(m\ot h).$$
We have an isomorphism
$$\tilde{\alpha}_{ 21}:\ \Cc_E({\bf 1},{\bf 2})\to {}_B\Hom(A\ot_BM,M),$$
given by the formulas
\begin{eqnarray*}
&&1\ot_B \tilde{\alpha}_{ 21}(u)(a\ot_B m)=u(a_{[1]})(a_{[0]}\ot_Bm);\\
&& \bigl(\hat{\alpha}_{ 21}(\phi)(h)\bigr)(a\ot_B m)= \sum_i
al_i(h)\ot_B \phi(r_i(h)\ot_B m).
\end{eqnarray*}
We then define $\alpha_{ 21}=\beta_1\circ \tilde{\alpha}_{\bf 21}$.\\
Finally, the isomorphism
$$\alpha_{ 22}:\ \Cc_E({\bf 2},{\bf 2})\to {}_B\End^H(A\ot_BM)$$
is given by the formulas
\begin{eqnarray*}
&&\alpha_{ 22}(w)(a\ot_B m)=w(a_{[1]})(a_{[0]}\ot_B m);\\
&&(\ol{\alpha}_{ 22}(\kappa))(h)(a\ot_B m)=
\sum_i al_i(h)\kappa(r_i(h)\ot_B m).
\end{eqnarray*}
\end{proof}

\section{Cleft extensions}\selabel{5}
Recall that a right $H$-comodule algebra $A$ is called cleft if there exists a convolution
invertible $t\in \Hom^H(H,A)$. This means precisely that ${\bf 1}$ and ${\bf 2}$ are
isomorphic objects in $\Cc_A$.\\
There is a Structure Theorem for cleft extensions, see \cite{DT} or \cite[Theorem 7.2.2]{Mont}:
cleft extensions are precisely the crossed product.
We will present a proof of this Theorem, based on the duality from \thref{4.1}. First let us recall
the precise definition of a crossed product, following \cite[Sec. 7.1]{Mont}.\\

Let $H$ be a Hopf algebra measuring an algebra $B$: this means that we have a map
$\omega:\ H\ot B\to B$, $\omega(h\ot b)=h\cdot b$ such that $h\cdot 1=\varepsilon(h)1$
and $h\cdot (bc)=(h_{(1)}\cdot b)(h_{(2)}\cdot c)$, for all $h\in H$ and $b,c\in B$. Let
$\sigma:\ H\ot H\to B$ be a map with convolution inverse $\ol{\sigma}$. $A\#_\sigma H$
is $A\# H$ with multiplication
\begin{equation}\eqlabel{5.1.1}
(b\# h)(c\# k)=b(h_{(1)}\cdot c)\sigma (h_{(2)}\ot k_{(1)})\# h_{(3)}k_{(2)}.
\end{equation}

The following result originates from \cite{BCM,DT}, see also \cite[Lemma 7.1.2]{Mont}.
The proof is straightforward.

\begin{proposition}\prlabel{5.1}
With notation as above, $B\#_\sigma H$ is an associative algebra with unit $1\# 1$
if and only if the following conditions hold:\\
1) $B$ is a twisted $H$-module, this means that $1\cdot b=b$, for all $b\in B$, and
\begin{equation}\eqlabel{5.1.2}
h\cdot(k\cdot b)=\sigma(h_{(1)}\ot k_{(1)})((h_{(2)}k_{(2)})\cdot b)\ol\sigma(h_{(3)}\ot k_{(3)}),
\end{equation}
for all $h,k\in H$ and $b\in B$;\\
2) $\sigma$ is a normalized cocycle; this means that $\sigma(h\ot 1)=\sigma(1\ot h)=
\varepsilon(h)1$ and
\begin{equation}\eqlabel{5.1.3}
\bigl(h_{(1)}\cdot \sigma(k_{(1)}\ot l_{(1)})\bigr)\sigma(h_{(2)}\ot k_{(2)}l_{(2)})
=\sigma(h_{(1)}\ot k_{(1)})\sigma(h_{(2)}k_{(2)}\ot l),
\end{equation}
for all $h,k,l\in H$. Then $B\#_\sigma H$ is called a crossed product; it is an $H$-comodule
algebra, with coaction induced by the comultiplication on $H$.
\end{proposition}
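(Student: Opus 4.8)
The plan is to prove both implications by direct computation with the multiplication \equref{5.1.1}, handling the unit axiom and associativity separately. First I would dispose of the unit. Computing $(1\# 1)(c\# k)$ and $(b\# h)(1\# 1)$ straight from \equref{5.1.1} and using only the measuring axiom $h\cdot 1=\varepsilon(h)1$, one finds $(1\#1)(c\#k)=(1\cdot c)\sigma(1\ot k_{(1)})\# k_{(2)}$ and $(b\#h)(1\#1)=b\,\sigma(h_{(1)}\ot 1)\# h_{(2)}$. Specialising to $c=1$ and to $b=1$ and applying $\Id\ot\varepsilon$ shows that $1\#1$ is a two-sided unit \emph{if and only if} $\sigma$ is normalized, i.e. $\sigma(h\ot 1)=\sigma(1\ot h)=\varepsilon(h)1$, and $1\cdot c=c$ for all $c\in B$. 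Thus the unit axiom accounts precisely for the equation $1\cdot b=b$ of condition 1) together with the normalization half of condition 2).

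For the necessity of the two remaining identities I would test associativity on special elements. Evaluating $\bigl((1\#h)(1\#k)\bigr)(b\#1)$ against $(1\#h)\bigl((1\#k)(b\#1)\bigr)$, and collapsing the factors $\sigma(-\ot 1)$ and $g\cdot 1$ by normalization, reduces both sides (after $\Id\ot\varepsilon$) to
\begin{equation*}
\sigma(h_{(1)}\ot k_{(1)})\bigl((h_{(2)}k_{(2)})\cdot b\bigr)=\bigl(h_{(1)}\cdot(k_{(1)}\cdot b)\bigr)\sigma(h_{(2)}\ot k_{(2)}).
\end{equation*}
Multiplying on the right by $\ol\sigma(h_{(3)}\ot k_{(3)})$ and using $\sigma*\ol\sigma=\varepsilon$ yields exactly \equref{5.1.2}. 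Likewise, comparing $\bigl((1\#h)(1\#k)\bigr)(1\#l)$ with $(1\#h)\bigl((1\#k)(1\#l)\bigr)$ and discarding the $H$-components by $\Id\ot\varepsilon$ produces \equref{5.1.3} on the nose. This shows associativity together with the unit axiom forces conditions 1) and 2).

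For sufficiency I would run the full associativity computation. Expanding $\bigl((b\#h)(c\#k)\bigr)(d\#l)$ by applying \equref{5.1.1} twice, using that $\Delta$ is an algebra map so that $h_{(3)}k_{(2)}$ comultiplies as $h_{(3)}k_{(2)}\ot h_{(4)}k_{(3)}\ot h_{(5)}k_{(4)}$, gives
\begin{equation*}
b(h_{(1)}\cdot c)\,\sigma(h_{(2)}\ot k_{(1)})\bigl((h_{(3)}k_{(2)})\cdot d\bigr)\sigma(h_{(4)}k_{(3)}\ot l_{(1)})\# h_{(5)}k_{(4)}l_{(2)},
\end{equation*}
while expanding $(b\#h)\bigl((c\#k)(d\#l)\bigr)$ and distributing the measuring by $h\cdot(xy)=(h_{(1)}\cdot x)(h_{(2)}\cdot y)$ gives
\begin{equation*}
b(h_{(1)}\cdot c)\bigl(h_{(2)}\cdot(k_{(1)}\cdot d)\bigr)\bigl(h_{(3)}\cdot\sigma(k_{(2)}\ot l_{(1)})\bigr)\sigma(h_{(4)}\ot k_{(3)}l_{(2)})\# h_{(5)}k_{(4)}l_{(3)}.
\end{equation*}
The common prefix $b(h_{(1)}\cdot c)$ cancels. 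I would then rewrite the factor $\sigma(h_{(2)}\ot k_{(1)})\bigl((h_{(3)}k_{(2)})\cdot d\bigr)$ of the first expression by means of the reformulation $\sigma(h_{(1)}\ot k_{(1)})\bigl((h_{(2)}k_{(2)})\cdot b\bigr)=\bigl(h_{(1)}\cdot(k_{(1)}\cdot b)\bigr)\sigma(h_{(2)}\ot k_{(2)})$ of \equref{5.1.2}, turning it into $\bigl(h_{(2)}\cdot(k_{(1)}\cdot d)\bigr)\sigma(h_{(3)}\ot k_{(2)})$. After this both sides share the further prefix $\bigl(h_{(2)}\cdot(k_{(1)}\cdot d)\bigr)$, and what remains to be matched is precisely \equref{5.1.3}, tensored with the coinciding $H$-component; applying it (in the form reading its right-hand side back to its left-hand side, which introduces the extra coproduct leg on $l$) closes the argument.

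The only real obstacle is the bookkeeping of Sweedler legs, up to five on $h$ in the left-associated product; conceptually the proof is transparent, since \equref{5.1.2} is exactly what lets one commute the module defect $(hk)\cdot d$ past $\sigma$, and \equref{5.1.3} is literally the discrepancy between the two bracketings once the module parts have been aligned.
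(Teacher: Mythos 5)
Your computation is correct: the unit axiom of $B\#_\sigma H$ is exactly equivalent to $1\cdot b=b$ plus the normalization of $\sigma$, testing associativity on $(1\# h)(1\# k)(b\# 1)$ and $(1\# h)(1\# k)(1\# l)$ recovers \equref{5.1.2} and \equref{5.1.3}, and your five-legged expansion for sufficiency matches, with \equref{5.1.2} aligning the module parts and \equref{5.1.3} (applied with one extra coproduct leg on $l$) closing the gap. The paper itself omits this argument entirely (``The proof is straightforward,'' citing \cite{BCM,DT} and \cite[Lemma 7.1.2]{Mont}), and your proof is precisely the standard direct verification being referred to.
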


Now we present the Structure Theorem for cleft $H$-comodule algebras. But first we make
the following remark. Assume that $t\in \Hom^H(H,A)$ has convolution inverse $u$.
Then $t(1)u(1)=u(1)t(1)=1$. Then $t'=u(1)t\in \Hom^(H,A)$ has convolution inverse $u t(1)$,
and satisfies $t'(1)=1$. So if $A$ is cleft, then there exists a convolution invertible $t\in \Hom^H(H,A)$
taking the value $1$ in $1$.

\begin{theorem}\thlabel{5.2}
Let $H$ be a projective Hopf algebra, $A$ a right $H$-comodule algebra, and
$B=A^{{\rm co}H}$. Then the following assertions are equivalent:
\begin{enumerate}
\item $A$ is cleft;
\item $A$ is isomorphic to a crossed product $B\#_\sigma H$;
\item $A$ is a faithfully flat left Hopf-Galois extension of $B$, and $A$ is isomorphic to
$B\ot H$ as a left $B$-module and a right $H$-comodule.
\end{enumerate}
\end{theorem}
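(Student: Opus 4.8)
The plan is to prove the chain of implications $(1)\Rightarrow(3)\Rightarrow(2)\Rightarrow(1)$, exploiting the categorical language set up in the preceding sections to organize what are otherwise classical but computation-heavy arguments. The key observation, recorded in the opening remark of \seref{5}, is that $A$ being cleft is \emph{exactly} the statement that the objects $\mathbf 1$ and $\mathbf 2$ are isomorphic in $\Cc_A$, witnessed by a convolution-invertible $t\in\Hom^H(H,A)=\Cc_A(\mathbf 2,\mathbf 1)$ (we may and do normalize $t(1)=1$, with convolution inverse $u$). This reformulation is what makes the duality of \thref{4.1} available as a tool.

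First I would prove $(1)\Rightarrow(3)$. The cleaving map $t$ (and its inverse $u$) lets me write down explicit mutually inverse $B$-bimodule, $H$-comodule maps between $A$ and $B\ot H$: the map $A\to B\ot H$ should send $a\mapsto u(a_{[1]})a_{[0]}\ot a_{[2]}$ (landing in $B\ot H$ because applying $u$ against the comodule structure kills the $H$-degree), with inverse $b\ot h\mapsto bt(h)$. Checking these are inverse and $H$-colinear is a direct Sweedler-notation computation using colinearity of $t$ and the convolution identities $t*u=u*t=\eta\varepsilon$. This simultaneously gives the module/comodule isomorphism $A\cong B\ot H$ and, since $B\ot H$ is faithfully flat, lets me verify that $\can'$ is bijective (its inverse is built from $\gamma$-type data assembled out of $t,u$), so by \thref{1.1} $A$ is a faithfully flat left $H$-Galois extension.

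Next, $(3)\Rightarrow(2)$. Here is where I would deploy \thref{4.1} with $M=B$: the duality $\alpha\colon\Cc_E\to\Ee_B$ identifies $\Cc_E$-data with honest maps between $A\ot_B B\cong A$ and $B\ot H$. Under condition $(3)$ the module/comodule isomorphism $A\cong B\ot H$ says precisely that $\mathbf 1$ and $\mathbf 2$ become isomorphic on the $\Ee_B$ side, hence (transporting across $\alpha$) that $A$ is cleft with some normalized $t$; the measuring is then defined by $h\cdot b=t(h_{(1)})\,b\,u(h_{(2)})$ and the cocycle by $\sigma(h\ot k)=t(h_{(1)})t(k_{(1)})u(h_{(2)}k_{(2)})$. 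I would then check that these land in $B=A^{\mathrm{co}H}$ (using colinearity of $t$, anti-colinearity of $u$) and that $a\mapsto (a_{[0]}u(a_{[1]}))\#a_{[2]}$ is an algebra isomorphism $A\to B\#_\sigma H$, the twisted-module and cocycle conditions of \prref{5.1} being forced by associativity in $A$. Finally $(2)\Rightarrow(1)$ is immediate: in a crossed product the map $h\mapsto 1\# h$ is an $H$-colinear convolution-invertible section, so $A$ is cleft.

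The main obstacle I anticipate is not any single implication but the bookkeeping in $(3)\Rightarrow(2)$: verifying that the derived measuring and cocycle genuinely satisfy the twisted-module identity \equref{5.1.2} and the normalized-cocycle identity \equref{5.1.3}, and that the candidate map is multiplicative for \equref{5.1.1}. The categorical duality of \thref{4.1} is what tames this, since it lets me replace ad hoc convolution manipulations by the already-established compatibility \equref{3.9.1} of the functors $\alpha_{ji}$ with composition; the real work is translating "$\mathbf 1\cong\mathbf 2$ in $\Ee_B$" back into the cocycle data and confirming the translation is faithful. Once the isomorphism $A\cong B\ot H$ from $(1)\Rightarrow(3)$ is in hand, the flatness and Galois claims follow formally from \thref{1.1}, so I expect those to be routine.
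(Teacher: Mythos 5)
Your proposal is correct in overall architecture but runs the cycle in the opposite direction from the paper: the paper proves $(1)\Rightarrow(2)\Rightarrow(3)\Rightarrow(1)$, whereas you propose $(1)\Rightarrow(3)\Rightarrow(2)\Rightarrow(1)$. Concretely, the paper's $(1)\Rightarrow(2)$ applies the duality of \thref{4.1} with $M=B$ (noting $E\cong A$, $F\cong B$, and that only the invertibility of $\eta'_B$ --- automatic here --- is needed, so the Galois hypothesis of \thref{4.1} can be dispensed with), transports the multiplication of $A$ across the isomorphism $\alpha_{12}(t):B\ot H\to A$, and reads off $\omega_t$ and $\sigma$; then $(2)\Rightarrow(3)$ is done by writing the canonical map of a crossed product explicitly and inverting it; and $(3)\Rightarrow(1)$ recovers a cleaving map from the normal-basis isomorphism via $\ol{\alpha}_{12}$. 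Your $(3)\Rightarrow(2)$ is in effect the paper's $(3)\Rightarrow(1)$ followed by its $(1)\Rightarrow(2)$, so there you are on solid ground, and your use of \thref{4.1} is legitimate since hypothesis (3) supplies exactly the faithfully flat left Galois assumption it needs. What your reordering buys is that you never need the paper's remark that \thref{4.1} survives with only $\eta'_B$ invertible; what it costs is that $(1)\Rightarrow(3)$ must be proved bare-handed, and that is where the difficulties concentrate.

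Two points in your $(1)\Rightarrow(3)$ need repair. First, a slip: the map $A\to B\ot H$ must be $a\mapsto a_{[0]}u(a_{[1]})\ot a_{[2]}$, not $a\mapsto u(a_{[1]})a_{[0]}\ot a_{[2]}$. With your order of multiplication the coaction of the first factor is $u(a_{[3]})a_{[0]}\ot S(a_{[2]})a_{[1]}$, and $S(h_{(2)})h_{(1)}\neq\varepsilon(h)1$ in general, so the element does not land in $B$. Second, and more seriously: ``since $B\ot H$ is faithfully flat'' only yields faithful flatness of $A$ as a \emph{left} $B$-module, because your isomorphism $A\cong B\ot H$ is left $B$-linear. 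But a ``faithfully flat left $H$-Galois extension'' in this paper means conditions (4)--(6) of \thref{1.1}, which require $A$ to be (faithfully) flat as a \emph{right} $B$-module; the right $B$-action transported to $B\ot H$ is not the obvious one, and right-sided faithful flatness does not follow from the stated isomorphism. The paper sidesteps this by establishing the crossed-product presentation first, where one can see freeness on both sides (on the right via $h\ot b\mapsto(1\# h)(b\#1)$, using invertibility of $\sigma$ and of the antipode). If you insist on proving $(1)\Rightarrow(3)$ directly, you will essentially have to reconstruct the crossed-product multiplication anyway to verify the right-sided flatness, at which point you have already proved $(2)$ --- so the paper's ordering of the implications is not merely cosmetic.
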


\begin{proof}
$(1)\Longrightarrow (2)$. \thref{4.1} holds under the assumption that $A$ is an
$H$-Galois extension. However, if $M\in {}_B\Mm$ is such that $\eta'_M$ is an isomorphism,
then we we still have the functor $\alpha$. This happens in the particular situation where
$M=B$. In this case $E={}_A\END(A\ot_BB)^{\rm op}={}_A\END(A)^{\rm op}\cong A$,
and $F=E^{{\rm co}H}=A^{{\rm co}H}=B$.\\
If $A$ is cleft, then there exists a convolution invertible $t\in \Hom^H(H,E)$, with $t(1)=1$,
and then
$\alpha_{12}(t):\ B\ot H\to A\ot_B B=A$ is an isomorphism in ${}_B\Mm^H$. We transport the multiplication
on $A$ to $B\ot H$, and write $B\#_\sigma H$ for $A\ot H$ with this multiplication. We can easily
make this explicit: with notation as in \thref{4.1}, let $\alpha_{12}(t)=\psi$, $u$ the convolution inverse
of $t$, $\tilde{\alpha}_{21}(u)=\phi$ and $\alpha_{21}(u)=\varphi$. Using the formulas in the proof
of \thref{4.1}, we find
$$\psi(b\ot h)=bt(h)~~;~~\phi(a)=a_{[0]}u(a_{[1]})~~:~~\varphi(a)=a_{[0]}u(a_{[1]})\ot a_{[2]}.$$
Now we transport the multiplication:
\begin{eqnarray*}
&&\hspace*{-2cm}
(b\# h)(c\# k)=\varphi\bigl(\psi(b\# k)\psi(c\# k)\bigr)
=\varphi(bt(h)ct(k))\\
&=&
bt(h_{(1)})ct(k_{(1)})u(h_{(2)}k_{(2)})\ot h_{(3)}k_{(3)}\\
&=&
bt(h_{(1)})cu(h_{(2)})t(h_{(3)})
t(k_{(1)})u(h_{(4)}k_{(2)})\ot h_{(5)}k_{(3)}
\end{eqnarray*}
Now define
\begin{equation}\eqlabel{omega}
\omega_t:\ H\ot B\to B,~~\omega_t(h\ot b)=t(h_{(1)})bu(h_{(2)})=h\cdot b,
\end{equation}
and
$$\sigma:\ H\ot H\to B,~~\sigma(h\ot k)=t(h_{(1)})t(k_{(1)})u(h_{(2)}k_{(2)}).$$
Then the multiplication is given by formula \equref{5.1.1}. The unit of the multiplication
is $\varphi(1)=u(1)\#1=1\#1$. It is obvious that $\omega_t$ measures $B$ and that
$\sigma$ is convolution invertible, with inverse
$\ol{\sigma}(h\ot k)=t(h_{(1)}k_{(1)})u(k_{(2)})u(h_{(2)})$. Straightforward computations
show that the conditions of \prref{5.1} are satisfied, so $A$ is isomorphic to the crossed
product $B\#_\sigma H$.\\
$(2)\Longrightarrow (3)$. Consider a crossed product $A=B\#_\sigma H$, as in \prref{5.1}.
Since $H$ is projective, and therefore faithfully flat, as a $k$-module, $A$ is faithfully 
flat as a left and right $B$-module.
Now $A\ot_B A=(B\ot H)\ot_B (B\ot H)\cong B\ot H\ot H$, and then it is easy to see that the
canonical map $\can:\ B\ot H\ot H\to B\ot H\ot H$ is given by the formula
$$\can(a\ot b\ot k)=a\sigma(h_{(1)}\ot k_{(1)})\ot h_{(2)}k_{(2)}\ot k_{(3)}.$$
$\can$ is bijective, with inverse
$$\can^{-1}(a\ot b\ot k)= a\ol{\sigma}(h_{(1)}S(k_{(2)})\ot k_{(3)})\ot h_{(2)}S(k_{(1)})\ot k_{(4)}.$$
Then $\can'$ is also bijective, and $A$ is a faithfully flat left and right $H$-Galois extension,
clearly isomorphic to $B\ot H$ as a left $B$-module and a right $H$-comodule.\\
$(2)\Longrightarrow (3)$. Since $A$ is a faithfully flat left $H$-Galois extension, we can apply
 \thref{4.1}. We have an isomorphism $\psi:\ B\ot H\to A$ in ${}_B\Mm^H$, and $t=\alpha_{12}(\psi)$
 is then a convolution invertible element in $\Hom^H(H,A)$. This shows that $A$ is cleft.
\end{proof}

\begin{remark}\relabel{5.3}
Let $A=B\#_\sigma H$ be a crossed product. From the formulas in \thref{4.1}, we can explicitly
compute $t=\alpha_{12}(\psi)$ and $u=\hat{\alpha}_{12}(\phi)$. First, $\psi:\ B\ot H\to A=B\#_\sigma H$
is the identity map, and then we see easily that $t(h)=1\# h$. In the proof of $(2)\Longrightarrow (3)$,
we constructed the inverse of the canonical map, and from this we deduce that
$$\sum_i l_i(h)\ot r_i(h)=
\Bigl(\ol{\sigma}\bigl(S(h_{(2)})\ot h_{(3)}\bigr)1_B\# S(h_{(1)})\Bigr)\ot_B \Bigl(1_B\# h_{(4)}\Bigr).$$
Now we have that $\phi=(B\ot \varepsilon):\ A=B\#_\sigma H\to B$, and then we see that
$$u(h)=\ol{\sigma}\bigl(S(h_{(2)})\ot h_{(3)}\bigr)1_B\# S(h_{(1)}).$$
Of course these formulas are well-known, see for example \cite[Prop. 7.2.7]{Mont}.
\end{remark}

If $t\in \Hom^H(H,A)$ is an algebra map, then $t$ is convolution invertible, with convolution inverse
$t\circ S$. Then the cocycle $\sigma$ constructed in the proof of \thref{5.2} is trivial,
and \equref{5.1.2} reduces to
$h\cdot(k\cdot b)=(hk)$, so that $B$ is an $H$-module algebra. Then $A$ is isomorphic to the
smash product $B\# H$. This proves $(1)\Longrightarrow (2)$ in the next theorem.

\begin{theorem}\thlabel{5.4}
Let $H$ be a projective Hopf algebra, $A$ a right $H$-comodule algebra, and
$B=A^{{\rm co}H}$. Then the following assertions are equivalent:
\begin{enumerate}
\item there exists an algebra map $t\in \Hom^H(H,A)$;
\item $A$ is isomorphic to a smash product $B\# H$.
\end{enumerate}
\end{theorem}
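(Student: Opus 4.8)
The implication $(1)\Longrightarrow (2)$ has already been settled in the paragraph immediately preceding the statement, so the plan is to prove only $(2)\Longrightarrow (1)$. The idea is to produce a canonical $H$-colinear algebra map out of a smash product and then transport it along the given isomorphism. Recall that a smash product is precisely the crossed product $B\#_\sigma H$ of \prref{5.1} with trivial cocycle $\sigma(h\ot k)=\varepsilon(h)\varepsilon(k)1_B$, so that its multiplication \equref{5.1.1} collapses to
\[
(b\# h)(c\# k)=b(h_{(1)}\cdot c)\# h_{(2)}k,
\]
its unit is $1_B\# 1$, and its $H$-comodule algebra structure has coaction $\rho(b\# h)=(b\# h_{(1)})\ot h_{(2)}$, with subalgebra of coinvariants $B\# 1\cong B$.

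First I would define $\iota:\ H\to B\# H$ by $\iota(h)=1_B\# h$; this is exactly the cleaving map that \reref{5.3} attaches to a crossed product. Using the measuring condition $h\cdot 1_B=\varepsilon(h)1_B$, the verification that $\iota$ is a unital algebra map is a one-line computation,
\[
\iota(h)\iota(k)=(1_B\# h)(1_B\# k)=(h_{(1)}\cdot 1_B)\# h_{(2)}k=1_B\# hk=\iota(hk),
\]
with $\iota(1)=1_B\# 1$ the unit. Colinearity is equally immediate, since $\rho(\iota(h))=(1_B\# h_{(1)})\ot h_{(2)}=\iota(h_{(1)})\ot h_{(2)}$, so that $\iota\in\Hom^H(H,B\# H)$ is an $H$-colinear algebra map.

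Finally, let $\theta:\ B\# H\to A$ denote the isomorphism furnished by $(2)$. Then $t=\theta\circ\iota$ is the composite of an $H$-colinear algebra map with an $H$-colinear algebra isomorphism, hence is itself an $H$-colinear algebra map, i.e.\ an algebra map $t\in\Hom^H(H,A)$, which establishes $(1)$. The argument involves no genuine obstacle: the only point deserving care is to read the phrase ``$A$ is isomorphic to a smash product $B\# H$'' in $(2)$ as an isomorphism of \emph{$H$-comodule algebras}, so that $\theta$ transports the algebra structure and the $H$-coaction simultaneously; granting this, everything reduces to the two routine checks above.
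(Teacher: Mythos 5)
Your proposal is correct and follows essentially the same route as the paper: the paper's proof of $(2)\Longrightarrow(1)$ simply invokes the map $t(h)=1_B\# h$ from \reref{5.3} and asserts it is an algebra map, which is exactly the map $\iota$ you construct and verify (your write-up just makes explicit the measuring identity $h\cdot 1_B=\varepsilon(h)1_B$ and the transport along the comodule-algebra isomorphism $\theta$, details the paper leaves implicit).
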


\begin{proof}
$(2)\Longrightarrow (1)$. The map $t$ constructed in \reref{5.3} is an algebra map.
\end{proof}

 Consider the space
\[\Omega_A=\{t\in \Hom^H(H,A)~|~t~{\rm
is~an~algebra~map}\}.\]
We have the following equivalence
relation on $\Omega_A$: $t_1\sim t_2$ if and only if there exists $b\in U(B)$
such that $bt_1(h)=t_2(h)b$, for all $h\in H$. We denote
$\ol{\Omega}_A=\Omega_A/\sim$. With some extra assumptions, we can give
a categorical and cohomological interpretation of $\Omega_A$ and $\ol{\Omega}_A$.
Throughout the rest of this Section, we will assume that $H$ is cocommutative, $B$
is commutative and $A$ is cleft. In this situation $\Cc_A({\bf 2},{\bf 2})=
\Hom(H,B)$.
For a convolution invertible $t\in \Hom^H(H,A)$,
we consider the map $\omega_t$, see \equref{omega}.

\begin{lemma}\lelabel{5.5}
$\omega_t$ is independent of the choice of $t$, and makes $B$ into a left
$H$-module algebra.
\end{lemma}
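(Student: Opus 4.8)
The plan is to handle the two assertions separately, after first recording that $\omega_t$ really lands in $B$. For that, I would compute $\rho\bigl(t(h_{(1)})bu(h_{(2)})\bigr)$ directly, using that $t\in\Cc_A({\bf 2},{\bf 1})$ is right $H$-colinear, that its convolution inverse $u$ lies in $\Cc_A({\bf 1},{\bf 2})$ (so $\rho(u(h))=u(h_{(2)})\ot S(h_{(1)})$), and that $b$ is coinvariant. Spelling this out over three coproduct legs, the two inner legs collapse through $h_{(2)}S(h_{(3)})=\varepsilon(\cdot)1$, giving $\rho(\omega_t(h\ot b))=\omega_t(h\ot b)\ot 1$. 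This step needs no commutativity or cocommutativity and will be used repeatedly.

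For independence, given convolution invertible $t_1,t_2\in\Hom^H(H,A)$ with inverses $u_1,u_2$, I set $p=t_2*u_1$. By the composition rule in $\Cc_A$ from \seref{2}, $p$ is the composite ${\bf 1}\to{\bf 2}\to{\bf 1}$, hence $p\in\Cc_A({\bf 1},{\bf 1})=\Hom(H,B)$ (equivalently, $\rho(p(h))=p(h)\ot 1$ by the collapse above); its convolution inverse $\ol p=t_1*u_2$ lies in $\Hom(H,B)$ for the same reason, and one has $t_2=p*t_1$, $u_2=u_1*\ol p$. Substituting these into $\omega_{t_2}(h\ot b)$ and expanding over four legs yields $\sum p(h_{(1)})\,t_1(h_{(2)})b\,u_1(h_{(3)})\,\ol p(h_{(4)})$. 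Every factor lies in $B$ — the outer two because $p,\ol p$ map into $B$, the inner block $t_1(h_{(2)})bu_1(h_{(3)})$ because it is a value of $\omega_{t_1}$ — so commutativity of $B$ lets me pull $p(h_{(1)})$ and $\ol p(h_{(4)})$ together, cocommutativity of $H$ lets me relabel them onto adjacent legs, and then $p*\ol p=\varepsilon 1$ collapses them, leaving exactly $\omega_{t_1}(h\ot b)$.

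For the module-algebra assertion the measuring axioms are immediate convolution identities: $\omega_t(h\ot 1)=\sum t(h_{(1)})u(h_{(2)})=\varepsilon(h)1$, and in the product $(h_{(1)}\cdot b)(h_{(2)}\cdot c)$ the inner pair $u(h_{(2)})t(h_{(3)})$ collapses by $u*t=\varepsilon 1$, giving $\omega_t(h\ot bc)$; also $1\cdot b=b$, since by independence I may compute with a normalized $t$ having $t(1)=1$ (whence $u(1)=1$). The one genuine point is associativity of the action, $h\cdot(k\cdot b)=(hk)\cdot b$. Here I would invoke \prref{5.1}: as $A$ is cleft it is a crossed product $B\#_\sigma H$, so $B$ is a twisted $H$-module and \equref{5.1.2} expresses $h\cdot(k\cdot b)$ as $\sigma(h_{(1)}\ot k_{(1)})\,((h_{(2)}k_{(2)})\cdot b)\,\ol\sigma(h_{(3)}\ot k_{(3)})$. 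Because $B$ is commutative the three $B$-valued factors commute, and because $H$ is cocommutative I may relabel so that $\sigma$ and $\ol\sigma$ sit on adjacent legs; then $\sigma*\ol\sigma=\varepsilon 1$ kills the cocycle and leaves $(hk)\cdot b$.

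I expect this last collapse — removing the cocycle from the twisted-module relation — to be the main obstacle, since it is the one step that genuinely uses commutativity of $B$ and cocommutativity of $H$ together, and one must check that the permutation of coproduct legs licensed by cocommutativity really brings $\sigma$ and $\ol\sigma$ into convolution position without disturbing the middle factor $((h_{(2)}k_{(2)})\cdot b)$. The independence argument rests on the identical mechanism, so once that collapse is arranged cleanly both parts of the Lemma follow.
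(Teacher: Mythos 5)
Your proposal is correct and takes essentially the same route as the paper: the module-algebra property is obtained by cancelling $\sigma$ against $\ol{\sigma}$ in the twisted-module relation \equref{5.1.2} using commutativity of $B$ and cocommutativity of $H$, and the independence statement is the same convolution collapse. The only cosmetic difference is that the paper inserts $b=u_0(h_{(1)})t(h_{(2)})bu(h_{(3)})t_0(h_{(4)})$ into the middle of $\omega_{t_0}(h\ot b)$ and cancels the outer pair $t_0*u_0$, whereas you factor $t_2=p*t_1$ and $u_2=u_1*\ol{p}$ with $p=t_2*u_1\in\Hom(H,B)$ and cancel $p*\ol{p}$; the ingredients are identical.
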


\begin{proof}
The second statement follows immediately from \equref{5.1.3}, taking into account that
$B$ is commutative. Let $t,t_0\in \Hom^H(H,A)$ be convolution invertilble,
with convolution inverses $u$ and $u_0$. Using the
commutativity of $B$ again, we find
$$u_0(h_{(1)})t(h_{(2)})bu(h_{(3)})t_0(h_{(3)})=
bu_0(h_{(1)})t(h_{(2)})u(h_{(3)})t_0(h_{(3)})=b.$$
Then
\begin{eqnarray*}
&&\hspace*{-2cm}
w_{t_0}(h\ot b)=t_0(h_{(1)})bu_0(h_{(2)})\\
&=& t_0(h_{(1)})u_0(h_{(2)})t(h_{(3)})bu(h_{(4)})t_0(h_{(5)})u_0(h_{(6)})\\
&=& t(h_{(1)})bu(h_{(2)})=w_{t}(h\ot b).
\end{eqnarray*}
\end{proof}

Since $B$ is a left $H$-module algebra, we can consider the Sweedler cohomology
groups $H^n(H,B)$ with values in $B$, see \cite{Sweedler}.

\begin{theorem}\thlabel{5.6}
Assume that $H$ is cocommutative, $B$ is commutative and $H$ is cleft. Then we have
the following subcategory $\Xx_A$ of $\Cc_A$. $\Xx_A$ has two objects ${\bf 1}$
and ${\bf 2}$, and
\begin{eqnarray*}
\Xx_A({\bf 1},{\bf 1})&=& Z^1(H,B);\\
\Xx_A({\bf 2},{\bf 1})&=& \Omega_A;\\
\Xx_A({\bf 2},{\bf 2})&=&\{\omega\in \Hom(H,B)~|~\omega\circ S\in Z^1(H,B)\};\\
\Xx_A({\bf 1},{\bf 2})&=&\{t\circ S~|~t\in \Omega\}.
\end{eqnarray*}
\end{theorem}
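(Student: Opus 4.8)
The plan is to check the three conditions that make $\Xx_A$ a subcategory of $\Cc_A$: that $\Xx_A({\bf i},{\bf j})\subseteq\Cc_A({\bf i},{\bf j})$ for all $\bf i,\bf j$, that the identities $\Id_{\bf 1}$ and $\Id_{\bf 2}$ lie in $\Xx_A$, and that $\Xx_A$ is closed under the convolution composition of $\Cc_A$. Throughout I will use three standing facts: cocommutativity of $H$ forces $S^2=\Id$ (so $\ol S=S$); by \leref{5.5} every convolution invertible $t\in\Hom^H(H,A)$ induces one and the same left $H$-module algebra structure $h\cdot b=\omega_t(h\ot b)$ on $B$, with respect to which $Z^1(H,B)$ is formed; and an algebra map $t\in\Omega_A$ is automatically convolution invertible, with inverse $t\circ S$.

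The inclusions for $({\bf 1},{\bf 1})$ and $({\bf 2},{\bf 2})$ are immediate, both target spaces being $\Hom(H,B)$; for $({\bf 2},{\bf 2})$ note that $\omega=(\omega\circ S)\circ S$ still maps into $B$. The inclusion $\Omega_A\subseteq\Hom^H(H,A)=\Cc_A({\bf 2},{\bf 1})$ is the definition of $\Omega_A$. For $({\bf 1},{\bf 2})$ I verify $t\circ S\in\Cc_A({\bf 1},{\bf 2})$ for $t\in\Omega_A$: colinearity of $t$ together with $\Delta(S(h))=S(h_{(2)})\ot S(h_{(1)})$ give $\rho((t\circ S)(h))=(t\circ S)(h_{(2)})\ot S(h_{(1)})$, as required. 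Finally $h\mapsto\varepsilon(h)1$ is a $1$-cocycle, hence lies in $\Xx_A({\bf 1},{\bf 1})$, and it is fixed by $(-)\circ S$, hence also lies in $\Xx_A({\bf 2},{\bf 2})$; these are $\Id_{\bf 1}$ and $\Id_{\bf 2}$.

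For closure I would introduce the operation $\Psi(f)=f\circ S$ on $\Hom(H,A)$. Cocommutativity makes $\Psi$ an involutive convolution homomorphism, since $(f*g)\circ S(h)=f(S(h_{(2)}))g(S(h_{(1)}))=(\Psi(f)*\Psi(g))(h)$. One checks that $\Psi$ interchanges $\Cc_A({\bf 2},{\bf 1})$ and $\Cc_A({\bf 1},{\bf 2})$ and preserves $\Hom(H,B)$, so it is a covariant automorphism of $\Cc_A$ that swaps ${\bf 1}$ and ${\bf 2}$; and directly from the definitions $\Psi(\Xx_A)=\Xx_A$. The eight composability cases therefore split into four $\Psi$-conjugate pairs, and it suffices to verify one representative of each: (i) $Z^1*Z^1\subseteq Z^1$; (ii) $c*t\in\Omega_A$ for $c\in Z^1$, $t\in\Omega_A$; (iii) $t*(t_0\circ S)\in Z^1$ for $t,t_0\in\Omega_A$; and (iv) $t*\omega\in\Omega_A$ for $t\in\Omega_A$, $\omega\in\Xx_A({\bf 2},{\bf 2})$.

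The computational engine is the commutation relation $t(h)b=(h_{(1)}\cdot b)\,t(h_{(2)})$ for $t\in\Omega_A$, $b\in B$ (read off from $h\cdot b=t(h_{(1)})bu(h_{(2)})$), together with the explicit cocycle identity $c(h_{(1)})(h_{(2)}\cdot c(k))=c(hk)$ to which Sweedler's $1$-coboundary condition collapses once $B$ is commutative and $H$ cocommutative. Item (i) is just the fact that the Sweedler $1$-cocycles form a subgroup of $(\Hom(H,B),*)$, see \cite{Sweedler}. For (ii), $c*t$ is colinear by the closure of $\Cc_A$, and commuting $t$ past $c(k)$ with the relation above, using multiplicativity of $t$ and then the cocycle identity, yields $(c*t)(h)(c*t)(k)=(c*t)(hk)$, so $c*t\in\Omega_A$; item (iv) is entirely analogous. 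I expect (iii) to be the main obstacle: it is the statement that two algebra-map cleavings differ by a $1$-cocycle, so here the cocycle identity must be \emph{produced} rather than consumed. Concretely $f:=t*(t_0\circ S)$ is convolution invertible with values in $B$, and the same manipulations, now fed by the multiplicativity (i.e. cocycle-triviality) of both $t$ and $t_0$ and by associativity, must be reorganized into the $1$-cocycle identity for $f$; once (iii) is secured, the $\Psi$-symmetry delivers the remaining four composites for free.
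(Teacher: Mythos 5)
Your overall architecture is sound and is essentially the paper's own: the paper likewise verifies three composites by direct convolution computation and obtains the remaining three by applying $(-)\circ S$, which is exactly your involution $\Psi$; your reduction to four $\Psi$-orbits (which also absorbs the $({\bf 2},{\bf 2},{\bf 2})$ case into the $({\bf 1},{\bf 1},{\bf 1})$ case, where the paper simply cites both as well-known groups) is a slightly tidier bookkeeping. The inclusions, the identities, and your cases (i), (ii), (iv) line up with the paper's items, and your sketches of (ii) and (iv) contain the correct mechanism: the commutation rule $t(h)b=(h_{(1)}\cdot b)t(h_{(2)})$, multiplicativity of $t$, commutativity of $B$, and the cocycle identity consumed at the end.

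The genuine gap is your case (iii), i.e.\ that $f=t*(t_0\circ S)$ lies in $Z^1(H,B)$ for $t,t_0\in\Omega_A$. You correctly identify it as the one place where the cocycle identity must be produced rather than consumed, but you then only assert that the manipulations ``must be reorganized'' into it --- you never exhibit the reorganization, and the needed trick is not stated anywhere in your write-up. The missing step (the paper's item 1)) is the insertion of $u(h_{(2)})t(h_{(3)})=\varepsilon(h_{(2)})1$ between the $B$-valued factor and the tail. Concretely, writing $u_0=t_0\circ S$ and using that $t$ is multiplicative and $u_0$ anti-multiplicative,
$$(t*u_0)(hk)=t(h_{(1)})\,t(k_{(1)})u_0(k_{(2)})\,u_0(h_{(2)})
=t(h_{(1)})\,(t*u_0)(k)\,u(h_{(2)})\,t(h_{(3)})\,u_0(h_{(4)});$$
now $(t*u_0)(k)\in B$ because $t*u_0$ is a composite ${\bf 1}\to{\bf 2}\to{\bf 1}$ in $\Cc_A$, so $t(h_{(1)})\,(t*u_0)(k)\,u(h_{(2)})=h_{(1)}\cdot(t*u_0)(k)$ by the definition of $\omega_t$ and \leref{5.5}, while $t(h_{(3)})u_0(h_{(4)})=(t*u_0)(h_{(2)})$ after relabelling. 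This is precisely the $1$-cocycle identity for $t*u_0$. With this computation supplied, your $\Psi$-symmetry does finish the remaining cases as claimed; without it, the proof is incomplete at its central point.
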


\begin{proof}
Recall that a convolution invertible $v:\ H\to B$ is a 1-cocycle in $Z^1(H,B)$ if
$$v(hk)=(h_{(1)}\cdot v(k))v(h_{(2)}),$$
for all $h,k\in H$. A convolution invertible $w:\ H\to B$ lies in $\Xx_A({\bf 2},{\bf 2})$
if
$$w(hk)=(S(k_{(1)})\cdot w(h))w(h_{(2)}),$$
for all $h,k\in H$. It is well-known that $\Xx_A({\bf 1},{\bf 1})= Z^1(H,B)$
and $\Xx_A({\bf 2},{\bf 2})$ are groups. Take $v\in Z^1(H,A)$, $w=v\circ S\in \Xx_A({\bf 2},{\bf 2})$,
$t,t'\in \Omega_A$, $u=t\circ S, u'=t'\circ S\in \Xx_A({\bf 1},{\bf 2})$.\\
1) $t*u_1\in Z^1(H,B)$: for all $h,k\in H$, we have
\begin{eqnarray*}
&&\hspace*{-2cm}
(t_1*u)(hk)=t(h_{(1)})t(k_{(1)})u_1(k_{(2)})u_1(h_{(2)})\\
&=& t(h_{(1)}) (t*u_1)(k)u(h_{(2)})t(h_{(3)})u_1(h_{(4)})\\
&=& (h_{(1)}\cdot (t*u_1)(k))(t*u)(k).
\end{eqnarray*}
2) $v*t\in \Omega_A$: for all $h,k\in H$, we have
\begin{eqnarray*}
&&\hspace*{-2cm}
(v*t)(hk)=(h_{(1)}\cdot v(k_{(1)}))v(h_{(2)})t(h_{(3)}t(k_{(2)})\\
&=& t(h_{(1)})v(k_{(1)})u(h_{(2)}) v(h_{(3)})t(h_{(4)}t(k_{(2)})\\
&=& t(h_{(1)})u(h_{(2)}) v(h_{(3)})t(h_{(4)}v(k_{(1)})t(k_{(2)})\\
&&\hspace*{1cm}(B~{\rm is~commutative})\\
&=& (v*t)(h)(v*t)(k).
\end{eqnarray*}
3) $t*w\in \Omega_A$: for all $h,k\in H$, we have
\begin{eqnarray*}
&&\hspace*{-2cm}
(t*w)(hk)= t(h_{(1)})t(k_{(1)}) (S(k_{(2)})\cdot w(h_{(2)}))w(k_{(3)})\\
&=& t(h_{(1)})t(k_{(1)}) u(k_{(2)}) w(h_{(2)})t(k_{(3)})w(k_{(3)})\\
&=& (t*w)(h)(t*w)(k).
\end{eqnarray*}
4) We know from 1) that $t*u_1\in Z^1(H,B)$, hence $(t*u_1)\circ S= u*t_1\in \Xx_A({\bf 2},{\bf 2})$.\\
5) We know from 2) that $v*t\in \Omega_A$, hence $(v*t)\circ S=w*u\in \Xx_A({\bf 1},{\bf 2})$.\\
6) We know from 3) that $t*w\in \Omega_A$, hence $(t*w)\circ S=u*v\in \Xx_A({\bf 1},{\bf 2})$.
\end{proof}

Obviously $\Xx_A$ is a groupoid: every morphism in $\Xx_A$ is invertible. Assume now that
$\Omega_A\neq \emptyset$, and fix $t_0\in \Omega_A$. Then the map $F:\ Z^1(H,B)\to \Omega_A$,
$F(v)=v*t_0$ is a bijection. The inverse is given by $F^{-1}(t)=t*u_0$, with $u_0=t_0\circ S$.

\begin{proposition}\prlabel{5.7}
$F$ sends equivalence classes in $Z^1(H,B)$ to equivalence classes in $\Omega_A$, and a similar
property holds for $F^{-1}$. Hence $F$ induces a bijection $H^1(H,B)\to \ol{\Omega}_A$.
\end{proposition}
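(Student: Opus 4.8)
The plan is to prove that both $F$ and its inverse $F^{-1}(t)=t*u_0$ (where $u_0=t_0\circ S$) are compatible with the two equivalence relations in play; since $F$ is already known to be a bijection, this will automatically upgrade it to a bijection on quotients. Recall that $H^1(H,B)=Z^1(H,B)/B^1(H,B)$, where the coboundaries are the maps $\partial(b):\ h\mapsto(h\cdot b)b^{-1}$ with $b\in U(B)$; two cocycles are cohomologous exactly when they differ by convolution with some $\partial(b)$. Since $H$ is cocommutative and $B$ is commutative, convolution on $\Hom(H,B)$ is commutative, so $Z^1(H,B)$ is abelian and these classes are the cosets of $B^1(H,B)$.

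The technical ingredient I would isolate first is a commutation rule between the values of a cleaving map and the elements of $B$. Starting from $h\cdot b=t(h_{(1)})b\,u(h_{(2)})$ (see \equref{omega}) for a convolution invertible $t\in\Hom^H(H,A)$ with convolution inverse $u$, the product $(h_{(1)}\cdot b)\,t(h_{(2)})$ equals $t(h_{(1)})b\,u(h_{(2)})t(h_{(3)})$, which reduces to $t(h)b$ because $u*t$ is the convolution unit $h\mapsto\varepsilon(h)1_A$. Hence
$$t(h)b=(h_{(1)}\cdot b)\,t(h_{(2)}).$$
By \leref{5.5} the action $\cdot$ is independent of the chosen cleaving map, so this identity holds simultaneously for $t_0$ and for every $t\in\Omega_A$; this is exactly what makes the two directions below symmetric.

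For $F$: if $v_1$ and $v_2$ are cohomologous, write $v_1=\partial(c)*v_2$, so that $F(v_1)(h)=(h_{(1)}\cdot c)c^{-1}F(v_2)(h_{(2)})$. I would then compute $c\,F(v_1)(h)=(h_{(1)}\cdot c)\,v_2(h_{(2)})t_0(h_{(3)})$, and, interchanging the first two legs of $h_{(1)}\ot h_{(2)}\ot h_{(3)}$ by cocommutativity and commuting elements of $B$, rewrite this as $v_2(h_{(1)})(h_{(2)}\cdot c)t_0(h_{(3)})$, which equals $F(v_2)(h)c$ by the commutation rule. Thus $c\,F(v_1)(h)=F(v_2)(h)c$, i.e. $F(v_1)\sim F(v_2)$, and $F$ descends to a well-defined map $\ol{F}:\ H^1(H,B)\to\ol{\Omega}_A$.

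For $F^{-1}$: if $t_1\sim t_2$, say $b\,t_1(h)=t_2(h)b$ with $b\in U(B)$, I substitute $t_1=b^{-1}t_2b$ into $F^{-1}(t_1)(h)=t_1(h_{(1)})u_0(h_{(2)})$ and apply the commutation rule to $t_2(h_{(1)})b$, obtaining $F^{-1}(t_1)(h)=b^{-1}(h_{(1)}\cdot b)F^{-1}(t_2)(h_{(2)})=\bigl(\partial(b)*F^{-1}(t_2)\bigr)(h)$, since $b^{-1}(h_{(1)}\cdot b)=(h_{(1)}\cdot b)b^{-1}=\partial(b)(h_{(1)})$. So $F^{-1}(t_1)$ and $F^{-1}(t_2)$ differ by the coboundary $\partial(b)$ and are cohomologous. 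Combining the two directions, $\ol{F}$ is well defined by the first, injective by the second (apply it to $t_i=F(v_i)$), and surjective because $F$ is onto; hence $\ol{F}$ is the desired bijection. The only real subtlety I anticipate is bookkeeping the legs of the coproduct when invoking cocommutativity together with commutativity of $B$, and keeping track of the fact that the same action $\cdot$ (independent of the cleaving map) is being used for $t_0$ and for the members of $\Omega_A$.
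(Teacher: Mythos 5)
Your proposal is correct and follows essentially the same route as the paper: both directions reduce to the commutation identity $t(h)b=(h_{(1)}\cdot b)t(h_{(2)})$ (which the paper derives inline by inserting $u*t=\eta\varepsilon$ rather than isolating it as a lemma) together with the independence of the action $\cdot$ from the cleaving map (\leref{5.5}) and the commutativity of $B$. The only cosmetic difference is that in the forward direction you apply the commutation rule to $t_0$ after a cocommutativity swap, whereas the paper applies it directly to $t_1=F(v_1)$, avoiding that swap.
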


\begin{proof}
For each invertible $b\in B$, we have a $1$-cocycle $f_b:\ H\to B$, $f_b(h)=(h\cdot b)b^{-1}$.
Then $B^1(H,B)=\{f_b~|~b\in U(B)$, and $H^1(H,B)=Z^1(H,B)/B^1(H,B)$. First assume that
$v\sim v_1$ in $Z^1(H,B)$. Then there exist $b\in U(B)$ such that
$v=f_b* v_1$. Let $F(v)=t$, $F(v_1)=t_1$, then
$$t=v*t_0=f_b*v_1*t_0=f_b*t_1$$
and
$$
t(h)= b^{-1}(h_{(1)}\cdot b)t_1(h_{(2)})=b^{-1}t_1(h_{(1)})bu_1(h_{(2)})t_{(1)}(h_{(3)})
=b^{-1}t_1(h)b,$$
for all $h\in H$, so that $t\sim t_1$. Conversely, if $t\sim t_1$, then there exists
$b\in U(B)$ such that $t(h)=b^{-1}t_1(h)b$, for all $h\in H$, and
\begin{eqnarray*}
&&\hspace*{-2cm}
(t* u_0)(h)= b^{-1} t_1(h_{(1)}) b u_0(h_{(2)})
= b^{-1} t_1(h_{(1)}) b u(h_{(2)})  t(h_{(3)})u_0(h_{(4)})\\
&=& b^{-1} (h_{(1)}\cdot b) (t_1* u_0)(h_{(2)})= (f_b* t_1*u_0)(h),
\end{eqnarray*}
for all $h\in H$, and then $t* u_0$ is cohomologous to $t_1* u_0$.
\end{proof}

\section{Stable modules and the Militaru-\c Stefan lifting Theorem}\selabel{6}
We return to the setting of \seref{3}: $A$ is a right faithfully flat $H$-Galois extension,
$B$ is the subalgebra of coinvariants, and $M$ is a right $B$-module.
Recall from \cite{Schneider1} that $M$ is called $H$-stable if $M\ot H$ and
$M\ot_B A$ are isomorphic as right $B$-modules and right $H$-comodules.
From \thref{3.1}, we immediately obtain the following result, originally due to
Schneider \cite{Schneider1} in the case where $H$ is finitely generated and projective,
and to
Militaru and \c Stefan, \cite[Lemma 3.2]{MiSt} in the general case.

\begin{proposition}\prlabel{6.1}
$M\in \Mm_B$ is $H$-stable if and only if $E=\END_A(M\ot B A)$ is cleft, that is,
there exists an $H$-colinear convolution invertible $t:\ H\to E$.
\end{proposition}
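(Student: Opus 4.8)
The plan is to unpack the definition of $H$-stability directly in terms of the categories from \thref{3.1} and observe that it is precisely the statement that the two objects of $\Dd_M$ are isomorphic. Recall that $M\in\Mm_B$ is $H$-stable exactly when $M\ot H$ and $M\ot_B A$ are isomorphic in $\Mm_B^H$, i.e.\ there is an isomorphism in $\Dd_M({\bf 1},{\bf 2})=\Hom_B^H(M\ot H,M\ot_B A)$ with inverse in $\Dd_M({\bf 2},{\bf 1})$. On the other hand, by the definition recalled at the start of \seref{5}, the comodule algebra $E=\END_A(M\ot_B A)$ is cleft precisely when the objects ${\bf 1}$ and ${\bf 2}$ are isomorphic in $\Cc_E$, that is, when there exists a convolution invertible $t\in\Cc_E({\bf 2},{\bf 1})=\Hom^H(H,E)$. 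So the whole proposition reduces to transporting ``the two objects are isomorphic'' across the isomorphism of categories $\alpha:\Cc_E\to\Dd_M$ supplied by \thref{3.1}.

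First I would invoke \thref{3.1}, whose hypotheses (that $H$ is projective and $A$ is a faithfully flat right $H$-Galois extension) are exactly those standing in \seref{6}; this gives an isomorphism of categories $\alpha:\Cc_E\to\Dd_M$ which is the identity on objects, sending ${\bf 1}\mapsto M\ot H$ and ${\bf 2}\mapsto M\ot_B A$. The key formal point, which I would state as the engine of the proof, is that any isomorphism (indeed any equivalence) of categories reflects and preserves isomorphisms between objects: two objects ${\bf i},{\bf j}$ are isomorphic in $\Cc_E$ if and only if $\alpha({\bf i})$ and $\alpha({\bf j})$ are isomorphic in $\Dd_M$. Concretely, $\alpha_{21}:\Cc_E({\bf 2},{\bf 1})\to\Hom_B^H(M\ot H,M\ot_B A)$ and $\alpha_{12}:\Cc_E({\bf 1},{\bf 2})\to\Hom_B^H(M\ot_B A,M\ot H)$ are bijections compatible with composition, by \equref{21a}--\equref{22}, so a morphism $t\in\Cc_E({\bf 2},{\bf 1})$ is convolution invertible (with inverse necessarily in $\Cc_E({\bf 1},{\bf 2})$) if and only if $\alpha_{21}(t)$ is an isomorphism in $\Dd_M$.

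For the forward direction I would take $t\in\Hom^H(H,E)=\Cc_E({\bf 2},{\bf 1})$ convolution invertible witnessing cleftness of $E$, and set $\psi=\alpha_{21}(t)$; compatibility of $\alpha$ with composition turns the convolution identities $t*u=\text{unit}$ and $u*t=\text{unit}$ into $\psi\circ\varphi=\Id$ and $\varphi\circ\psi=\Id$, where $\varphi=\alpha_{12}(u)$, exhibiting $M\ot H\cong M\ot_B A$ in $\Mm_B^H$ and hence showing $M$ is $H$-stable. Conversely, given an isomorphism $\psi:M\ot H\to M\ot_B A$ in $\Mm_B^H$ with inverse $\varphi$, I would pull these back through the inverse bijections $\alpha_{21}^{-1}=\ol{\beta}_{21}\circ\gamma_{21}$ (and the corresponding map on $\Cc_E({\bf 1},{\bf 2})$) to obtain $t\in\Hom^H(H,E)$ and $u\in\Cc_E({\bf 1},{\bf 2})$ that are mutually convolution inverse, again by functoriality of $\alpha$.

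I do not expect a genuine obstacle: the entire content is the category isomorphism of \thref{3.1}, and what remains is the soft categorical fact that isomorphisms of categories transport object-isomorphisms. The only point requiring a line of care is the bookkeeping that the unit morphisms and the convolution product in $\Cc_E$ correspond under $\alpha$ to identity morphisms and composition in $\Dd_M$ — but this is exactly the content of the commuting relations \equref{21a}--\equref{22} established in the proof of \thref{3.1}, so I would simply cite them rather than recompute. Thus the proof is essentially a two-line application of \thref{3.1} together with the definitions of $H$-stability and of cleftness in terms of isomorphic objects in the respective two-object categories.
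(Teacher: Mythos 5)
Your proposal is correct and is exactly the argument the paper intends: the paper gives no written proof, stating only that the proposition follows ``immediately'' from \thref{3.1}, and your unpacking --- $H$-stability means the two objects of $\Dd_M$ are isomorphic, cleftness of $E$ means the two objects of $\Cc_E$ are isomorphic, and the category isomorphism $\alpha$ transports one to the other --- is precisely that immediate deduction. The one parenthetical point you flag (that the convolution inverse of a colinear $t$ automatically lands in $\Cc_E({\bf 1},{\bf 2})$) is a standard fact the paper also takes for granted in its definition of cleftness at the start of \seref{5}.
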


As we have seen in \seref{5}, an $H$-colinear algebra map is convolution invertible.
Militaru and \c Stefan proved that the existence of an $H$-colinear algebra map
$t:\ H\to E$ is equivalent to the existence of an associative action of $A$ and $M$
extending the right $B$-action. This can also be derived from \thref{3.1}, which is
what we will now discuss.
We fix the following notation: $\phi:\ M\ot_B A\to A$ is a right $B$-linear map,
$\varphi=\delta_1(\phi)$, $\hat{\beta}_{12}(\phi)=u'$, $t=u\circ S=\hat{\alpha}_{12}(\phi)$.
We also write $\phi(m\ot_B a)=m\cdot a$. From \leref{3.6}, we recall the following formulas
(see (\ref{eq:3.6.0}-\ref{eq:3.6.1}):
\begin{eqnarray}
m\cdot a\ot_B 1&=& u'(a_{[1]})(m\ot_B a_{[0]});\eqlabel{6.2.1}\\
t(h)(m\ot_B a)&=& \sum_i \phi(m\ot_B l_i(h))\ot_B r_i(h)=\sum_i m\cdot l_i(h)\ot_B r_i(h)
\eqlabel{6.2.2}.
\end{eqnarray}
We then immediately have the following result:

\begin{proposition}\prlabel{6.2}
With notation as above, the following assertions are equivalent:
\begin{enumerate}
\item $t(1)=1$;
\item $u'(1)=1$;
\item $m\cdot 1=1$.
\end{enumerate}
\end{proposition}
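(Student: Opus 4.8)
The plan is to treat $(1)\Leftrightarrow(2)$ as a formality and to put the real work into $(2)\Leftrightarrow(3)$. By the notation fixed just before the statement, $t=u'\circ S$, so that $t(1)=u'(S(1))=u'(1)$, since $S(1)=1$. As the unit of $E=\END_A(M\ot_B A)$ is the identity endomorphism, the assertions $t(1)=1$ and $u'(1)=1$ say precisely that $t(1)$, respectively $u'(1)$, equals $\mathrm{id}_{M\ot_B A}$; the computation $t(1)=u'(1)$ therefore gives $(1)\Leftrightarrow(2)$ at once.

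For $(2)\Leftrightarrow(3)$ I would specialise \equref{6.2.1} to $a=1$. Since $A$ is an $H$-comodule algebra we have $\rho(1)=1\ot 1$, i.e. $1_{[0]}\ot 1_{[1]}=1\ot1$, and \equref{6.2.1} becomes
$$(m\cdot 1)\ot_B 1=u'(1)(m\ot_B 1)$$
for all $m\in M$. Assuming $(2)$, the right-hand side is $m\ot_B 1$, so $(m\cdot 1)\ot_B 1=m\ot_B 1$. Both sides lie in the image of $\eta_M$, being $\eta_M(m\cdot 1)$ and $\eta_M(m)$; as $\eta_M:\ M\to(M\ot_B A)^{{\rm co}H}$ is an isomorphism by \thref{1.1}, it is in particular injective, and we deduce $m\cdot 1=m$, which is $(3)$.

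The converse $(3)\Rightarrow(2)$ is the one implication with genuine content. Granting $(3)$, the displayed identity reads $u'(1)(m\ot_B 1)=m\ot_B 1$ for every $m$, so the endomorphism $u'(1)$ agrees with the identity on all elements of the form $m\ot_B 1$. The point now is that these elements generate $M\ot_B A$ as a right $A$-module, since $m\ot_B a=(m\ot_B 1)a$, and that $u'(1)$ is right $A$-linear because it lies in $E=\END_A(M\ot_B A)$. Writing an arbitrary element as $\sum_i(m_i\ot_B 1)a_i$ and applying $A$-linearity, we get $u'(1)=\mathrm{id}_{M\ot_B A}=1$, i.e. $(2)$. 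I expect this last passage from generators to the whole module to be the only place where anything beyond bookkeeping is needed; it rests entirely on the $A$-linearity of $u'(1)$, while the remaining steps are immediate consequences of \equref{6.2.1}, the identity $t=u'\circ S$, and the injectivity of $\eta_M$.
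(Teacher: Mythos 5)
Your argument is correct. For $(1)\Leftrightarrow(2)$ and $(2)\Rightarrow(3)$ you do exactly what the paper does: the first is immediate from $t=u'\circ S$ and $S(1)=1$, and the second is \equref{6.2.1} at $a=1$ together with the injectivity of $\eta_M$ (note that the ``$m\cdot 1=1$'' in the statement is a typo for $m\cdot 1=m$, which is how you correctly read it). Where you diverge is in closing the cycle: the paper proves $(3)\Rightarrow(1)$ directly from \equref{6.2.2}, evaluating $t(1)(m\ot_B a)=\sum_i m\cdot l_i(1)\ot_B r_i(1)a$ and using that $\gamma_A(1)=\can^{-1}(1\ot 1)=1\ot_B 1$, so that $(3)$ gives $t(1)(m\ot_B a)=m\ot_B a$. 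You instead prove $(3)\Rightarrow(2)$ by observing that $u'(1)$ fixes all elements $m\ot_B 1$, that these generate $M\ot_B A$ as a right $A$-module, and that $u'(1)\in E=\END_A(M\ot_B A)$ is right $A$-linear. Both routes are sound; the paper's uses one more piece of the translation-map calculus (the normalization of $\gamma_A$ at $1$), while yours trades that for the generators-plus-$A$-linearity observation and is, if anything, slightly more elementary. Either way the three statements are linked in a cycle, so the equivalence follows.
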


\begin{proof} $(1)\Longrightarrow (2)$ is obvious. $(2)\Longrightarrow (3)$ follows immediately
from \equref{6.2.1}, and $(3)\Longrightarrow (1)$ follows from \equref{6.2.2}.
\end{proof}

\begin{proposition}\prlabel{6.3}
With notation as above, the following assertions are equivalent:
\begin{enumerate}
\item $t$ is multiplicative;
\item $u$ is anti-multiplicative;
\item the right $A$-action on $M$ defined by $\phi$ is associative.
\end{enumerate}
\end{proposition}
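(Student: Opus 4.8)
The plan is to establish the cycle of implications $(1)\Rightarrow(2)\Rightarrow(3)\Rightarrow(1)$, translating each assertion about the convolution-algebraic structure of the maps $t,u,u'$ into the associativity of the action $\phi(m\ot_B a)=m\cdot a$. The basic dictionary is provided by the two formulas \equref{6.2.1} and \equref{6.2.2} together with the fundamental identities \equref{1.2.5}, \equref{1.2.6}, \equref{1.2.6a} and \equref{1.2.7} for $\gamma_A$. Since $t=u\circ S$, multiplicativity of $t$ and anti-multiplicativity of $u$ are essentially the same statement read through the antipode, so $(1)\Leftrightarrow(2)$ should be formal; the genuine content is linking either of these to associativity.

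First I would unwind what associativity of the action means in terms of $u'$. Using \equref{6.2.1}, the elements $(m\cdot a)\cdot a'$ and $m\cdot(aa')$ can both be expressed via $u'$ applied to the $H$-components of $a$ and $a'$; associativity $(m\cdot a)\cdot a'=m\cdot(aa')$ should reduce, after applying the characterizing formula for $\gamma_A$, to the identity
$$u'(a_{[1]}a'_{[1]})=u'(a'_{[1]})\circ u'(a_{[1]})$$
read on the relevant arguments, which is precisely anti-multiplicativity of $u'$ (equivalently, via $t=u'\circ S\circ\ldots$, of $u$). The cleanest route is probably to prove $(3)\Leftrightarrow(2)$ directly at the level of $u'$ using only \equref{6.2.1}, and then deduce $(1)\Leftrightarrow(2)$ from $t=u\circ S$ and the fact that $S$ is an anti-coalgebra map, so that convolution in $\Hom(H,E)$ and convolution in $\Hom(H^{\mathrm{cop}},E)$ are interchanged.

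For the implication $(1)\Rightarrow(3)$, I would start from \equref{6.2.2}, compute $t(h)(m\ot_B a)$ explicitly, and then use multiplicativity $t(hh')=t(h)\circ t(h')$ together with the product formula \equref{1.2.7} for $\gamma_A(hh')$ in terms of the $l_i,r_i$. Substituting $h=a_{[1]}$, $h'=a'_{[1]}$ and then collapsing the $\gamma_A$-terms via \equref{1.2.5} should produce exactly the associativity relation $m\cdot(aa')=(m\cdot a)\cdot a'$. Conversely, for $(3)\Rightarrow(1)$, associativity lets one rewrite $\sum_i m\cdot l_i(hh')\ot_B r_i(hh')$ and, again invoking \equref{1.2.7} and \equref{1.2.5} to reassemble the $\gamma_A$-contributions, recover $(t(h)\circ t(h'))(m\ot_B a)$, giving multiplicativity.

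The main obstacle will be the bookkeeping in the step that uses \equref{1.2.7}: one must correctly match the nested indices $i,j$ coming from $\gamma_A(hh')=\sum_{i,j}l_i(h')l_j(h)\ot_B r_j(h)r_i(h')$ against the way $\phi$ and the tensor factors over $B$ interact, and then apply \equref{1.2.5} at the right stage to contract $\sum_i l_i(\cdot)r_i(\cdot)=\varepsilon(\cdot)1_A$. Care is needed because $\phi$ is only right $B$-linear, so one cannot freely move $A$-elements across the tensor; the coinvariance property \equref{1.2.2} of $\gamma_A(h)$ is what guarantees the intermediate expressions stay well-defined over $B$. Once the index conventions are fixed, each of the three equivalences is a short convolution computation, and the proof reduces to assembling them into the stated cycle.
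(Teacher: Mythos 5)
Your plan follows essentially the same route as the paper: the cycle $(1)\Rightarrow(2)\Rightarrow(3)\Rightarrow(1)$, with $(2)\Rightarrow(3)$ carried out at the level of $u'$ via \equref{6.2.1} and $(3)\Rightarrow(1)$ via \equref{6.2.2} combined with the product formula \equref{1.2.7} for $\gamma_A(hh')$. The only cosmetic difference is that the paper never needs \equref{1.2.5} in the $(3)\Rightarrow(1)$ step (the $\gamma_A$-terms are reabsorbed directly by two applications of \equref{6.2.2} rather than contracted), so your sketch is correct and matches the paper's argument.
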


\begin{proof}
$(1)\Longrightarrow (2)$ is obvious.\\
$(2)\Longrightarrow (3)$. For all $m\in M$ and $a,b\in A$, we have
\begin{eqnarray*}
&&\hspace*{-2cm}
(m\cdot (ab))\ot_B 1\equal{\equref{6.2.1}} u'(a_{[1]}b_{[1]})(m\ot_B a_{[0]}b_{[0]})\\
&=& \bigl((u'(b_{[1]})\circ u'(a_{[1]})\bigr)(m\ot_B a_{[0]}b_{[0]})\\
&=& u'(b_{[1]}) \bigl( u'(a_{[1]})(m\ot_B a_{[0]})b_{[0]}\bigr)\\
&\equal{\equref{6.2.1}}& 
u'(b_{[1]}) (m\cdot a\ot_B b_{[0]}
\equal{\equref{6.2.1}} (m\cdot a)\cdot b.
\end{eqnarray*}
$(3)\Longrightarrow (1)$. For all $h,k\in H$, $m\in M$ and $a\in A$, we have
\begin{eqnarray*}
&&\hspace*{-2cm}
t(hk)(m\ot_B a)\equal{\equref{6.2.2}}
\sum_i m\cdot l_i(hk)\ot_B r_i(hk)\\
&\equal{\equref{1.2.7}}&
\sum_{i,j} m\cdot (l_i(k)l_j(h))\ot_B r_j(h)r_i(k)\\
&=&
\sum_{i,j} (m\cdot l_i(k))\cdot l_j(h)\ot_B r_j(h)r_i(k)\\
&\equal{\equref{6.2.2}}&
\sum_i t(h)(m\cdot l_i(k)\ot_B r_i(k))\\
&\equal{\equref{6.2.2}}& (t(h)\circ t(k))(m\ot_B a).
\end{eqnarray*}
\end{proof}

Combining these results, we obtain the Militaru-\c Stefan lifting Theorem, see \cite[Theorem 2.3]{MiSt}.

\begin{theorem}\thlabel{6.4}
With notation as above, the following assertions are equivalent:
\begin{enumerate}
\item $t$ is an algebra map;
\item $u$ is an anti-algebra map;
\item $\phi$ makes $M$ into a right $B$-module.
\end{enumerate}
\end{theorem}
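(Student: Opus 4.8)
The plan is to obtain \thref{6.4} by combining the two preceding propositions, \prref{6.2} and \prref{6.3}, essentially for free. The key observation is that an algebra map is exactly a map that is both multiplicative and unit-preserving, and likewise an anti-algebra map is one that is anti-multiplicative and unit-preserving. So for the equivalence of (1), (2) and (3) I would decompose each condition into its ``multiplicative part'' and its ``unital part'' and treat these two parts separately using the results already proved.

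First I would note that statement (1), that $t$ is an algebra map, is the conjunction of the assertion ``$t$ is multiplicative'' (condition (1) of \prref{6.3}) together with the assertion ``$t(1)=1$'' (condition (1) of \prref{6.2}). Similarly, statement (2), that $u$ is an anti-algebra map, is the conjunction of ``$u$ is anti-multiplicative'' (\prref{6.3}(2)) and ``$u(1)=1$''; here I would remark that since $u=t\circ S$ and $S(1)=1$, the unital condition $u(1)=1$ coincides with $t(1)=1$, which via \prref{6.2} is equivalent to the conditions $u'(1)=1$ and $m\cdot 1=m$. Finally, statement (3), that $\phi$ makes $M$ into a right $A$-module extending the $B$-action, is precisely the conjunction of associativity of the action (\prref{6.3}(3)) and the unitality $m\cdot 1=m$ (\prref{6.2}(3)).

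The argument then proceeds by pairing off. By \prref{6.3}, the three multiplicativity-type conditions (multiplicativity of $t$, anti-multiplicativity of $u$, associativity of the $\phi$-action) are mutually equivalent. By \prref{6.2}, the three unitality-type conditions ($t(1)=1$, $u'(1)=1$, $m\cdot 1=m$) are mutually equivalent, and as noted the unitality of $u$ is the same statement as $t(1)=1$. Taking the conjunction in each of the three columns, I conclude that (1), (2) and (3) of \thref{6.4} are equivalent, since each is the conjunction of one member of the \prref{6.3} family and one member of the \prref{6.2} family, and the two families match up.

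I do not expect any genuine obstacle here, since all the analytic content has already been absorbed into \prref{6.2} and \prref{6.3}; the only point requiring a little care is the bookkeeping identification of the unital condition for the anti-algebra map $u$ with the unital condition for $t$, which follows at once from $u=t\circ S$ and $S(1)=1$. The proof is therefore essentially a one-line deduction: \emph{combine \prref{6.2} and \prref{6.3}}, observing that being an (anti-)algebra map means being (anti-)multiplicative and unit-preserving.
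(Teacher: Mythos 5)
Your proposal is correct and is exactly what the paper does: the paper gives no separate proof of \thref{6.4}, introducing it with the phrase ``Combining these results,'' i.e.\ the conjunction of \prref{6.2} (the unital conditions) and \prref{6.3} (the multiplicative conditions), which is precisely your pairing-off argument. Your extra remark that $u(1)=t(S(1))=t(1)$ is a harmless piece of bookkeeping the paper leaves implicit.
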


Now consider the set $\Lambda_M$ consisting of all right $B$-linear maps $\phi:\ M\ot_B A\to M$
defining a right $A$-module structure on $M$. It follows from \thref{6.4} that $\hat{\alpha}_{12}:\
\Lambda_M\to \Omega_E$ is a bijection. $\phi_1,\phi_2\in \Lambda_M$ are called equivalent
if the resulting right $A$-modules $M_1$ and $M_2$ are isomorphic. Let $\ol{\Lambda}$
be the quotient set.

\begin{proposition}\prlabel{6.5} \cite[Theorem 2.6]{MiSt}
Let $\phi_1,\phi_2\in \Lambda_M$, and $t_1=\hat{\alpha}_{12}(\phi_1)$,
$t_2=\hat{\alpha}_{12}(\phi_2)$ the corresponding $H$-colinear algebra maps
$H\to E$. Then $\phi_1\sim \phi_2$ if and only if $t_1\sim t_2$. Consequently
$\ol{\Omega}_E\cong \ol{\Lambda}$ classifies the isomorphism classes of right
$A$-module structures on $M$ extending the right $B$-action on $M$.
\end{proposition}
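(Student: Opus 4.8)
The plan is to show that the bijection $\hat{\alpha}_{12}:\ \Lambda_M\to\Omega_E$ of \thref{6.4} is compatible with the two equivalence relations, and then read off the classification. First I would unwind both relations into a common language. If $\phi_1,\phi_2\in\Lambda_M$ yield the right $A$-modules $M_1=(M,\cdot_1)$ and $M_2=(M,\cdot_2)$, then both $\cdot_i$ restrict to the one fixed right $B$-action on $M$, so any $A$-linear isomorphism $f:\ M_1\to M_2$ is automatically $B$-linear (take $a=b\in B$ in $f(m\cdot_1 a)=f(m)\cdot_2 a$, using $m\cdot_i b=mb$). Hence $\phi_1\sim\phi_2$ exactly when there is a $B$-linear automorphism $f\in U(\End_B(M))$ with $f(m\cdot_1 a)=f(m)\cdot_2 a$ for all $m\in M$, $a\in A$. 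On the other side, the relation of \seref{5} applied to $E$ reads: $t_1\sim t_2$ iff there is $c\in U(F)$ with $c\circ t_1(h)=t_2(h)\circ c$ in $E$ for all $h\in H$. Using the identification $F=E^{{\rm co}H}\cong\End_B(M)$ from \seref{3}, I would record the dictionary: the element $c\in U(F)$ corresponding to $f$ acts by $c(m\ot_B a)=f(m)\ot_B a$, and $c$ is invertible precisely when $f$ is.

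For the implication $t_1\sim t_2\Rightarrow\phi_1\sim\phi_2$, I would pass from $t_i$ to $u'_i=t_i\circ\ol{S}$ (see \prref{2.1}); since $\ol{S}$ is bijective on $H$, the conjugation relation becomes $c\circ u'_1(h)=u'_2(h)\circ c$ for all $h$. Then I exploit the closed formula \equref{6.2.1}, namely $m\cdot_i a\ot_B 1=u'_i(a_{[1]})(m\ot_B a_{[0]})$. Applying $c$ and commuting it past $u'_1$ gives
$$f(m\cdot_1 a)\ot_B 1=c\bigl(u'_1(a_{[1]})(m\ot_B a_{[0]})\bigr)=u'_2(a_{[1]})\bigl(f(m)\ot_B a_{[0]}\bigr)=f(m)\cdot_2 a\ot_B 1,$$
and since $\eta_M:\ M\to(M\ot_B A)^{{\rm co}H}$, $m\mapsto m\ot_B 1$, is injective by \thref{1.1}, I may cancel $\ot_B 1$ to conclude $f(m\cdot_1 a)=f(m)\cdot_2 a$; thus $f$ witnesses $\phi_1\sim\phi_2$.

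For the converse I would feed the hypothesis $f(m\cdot_1 a)=f(m)\cdot_2 a$ into the defining formula \equref{6.2.2}. Evaluating the two composites at $m\ot_B a$ yields $(c\circ t_1(h))(m\ot_B a)=\sum_i f(m\cdot_1 l_i(h))\ot_B r_i(h)a$ and $(t_2(h)\circ c)(m\ot_B a)=\sum_i f(m)\cdot_2 l_i(h)\ot_B r_i(h)a$, which agree term by term by the $A$-linearity of $f$; hence $c\circ t_1(h)=t_2(h)\circ c$ and $t_1\sim t_2$. Having shown that $\hat{\alpha}_{12}$ satisfies $\phi_1\sim\phi_2\Leftrightarrow\hat{\alpha}_{12}(\phi_1)\sim\hat{\alpha}_{12}(\phi_2)$, the bijection descends to a bijection $\ol{\Lambda}\cong\ol{\Omega}_E$; since $\ol{\Lambda}$ is by definition the set of isomorphism classes of right $A$-module structures on $M$ extending the right $B$-action, this is exactly the asserted classification.

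The computations are one-liners once the scaffolding is in place, so the real work — and the main obstacle — is the bookkeeping: verifying that an $A$-module isomorphism is forced to be a $B$-linear automorphism of the common underlying module, pinning down the precise dictionary $c\leftrightarrow f$ through the equivalence $F\cong\End_B(M)$, and, crucially, realising that in the reverse direction one must first rewrite the conjugation relation in terms of $u'_i$ (equivalently, replace $h$ by $\ol{S}(h)$) before \equref{6.2.1} can be brought to bear. The final cancellation of $\ot_B 1$ rests essentially on the faithfully flat Galois hypothesis, through the isomorphism $\eta_M$ of \thref{1.1}.
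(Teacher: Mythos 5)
Your proposal is correct and follows essentially the same route as the paper: both proofs identify $U(F)\cong U(\End_B(M))$ via $f\mapsto f\ot_B A$, use the rewritten conjugation relation in terms of $u'_i$ together with \equref{6.2.1} for one implication, and \equref{6.2.2} for the converse, before passing to quotients. Your explicit remarks that an $A$-linear isomorphism is automatically $B$-linear and that the cancellation of $\ot_B 1$ rests on the injectivity of $\eta_M$ are details the paper leaves implicit, but they do not change the argument.
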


\begin{proof}
Let $M_i=M$ with right $A$-action $m\cdot_i a=\phi_i(m\ot_B a)$, and $u'_i=t_i\circ S^{-1}$
Recall from \seref{5}
that $t_1\sim t_2$ if and only if there exists an invertbile $f\in \End_B(M)\cong E^{{\rm co} H}$
such that
\begin{equation}\eqlabel{6.5.1}
t_1(h)\circ (f\ot_B A)=(f\ot_B A)\circ t_2(h),
\end{equation}
or, equivalently,
\begin{equation}\eqlabel{6.5.2}
u'_1(h)\circ (f\ot_B A)=(f\ot_B A)\circ u'_2(h),
\end{equation}
$\phi_1\sim \phi_2$ if and only if there exists an invertible $f\in \End_B(M)$ such that
$f(m\cdot_2 a)=f(m)\cdot_1 a$, for all $m\in M$ and $a\in A$.\\
If $t_1\sim t_2$ then
\begin{eqnarray*}
&&\hspace*{-2cm}
f(m\cdot_2 a)\ot_B 1\equal{\equref{3.6.0}}
((f\ot_B A)\circ u'_2(a_{[1]}))(m\ot_B a_{[0]}\\
&\equal{\equref{6.5.1}}&
(u'_1(a_{[1]})\circ (f\ot_B A))(m\ot_B a_{[0]}
= f(m)\cdot_1 a\ot_B 1,
\end{eqnarray*}
and it follows that $\phi_1\sim \phi_2$. Conversely, if $\phi_1\sim \phi_2$, then
\begin{eqnarray*}
&&\hspace*{-2cm}
((f\ot_B A)\circ t_2(h))(m\ot_B a)\equal{\equref{3.6.1}}
\sum_i f(m\cdot_2 l_i(h))\ot_B r_i(h)\\
&\equal{\equref{6.5.2}}&
\sum_i f(m)\cdot_1 l_i(h)\ot_B r_i(h)
= (t_1(h)\circ (f\ot_B A))(m\ot_Ba),
\end{eqnarray*}
and it follows that $t_1\sim t_2$.
\end{proof}

If $H$ is cocommutative, $\End_B(M)$ is commutative and $\Omega_E\neq\emptyset$, then
we can apply \prref{5.7}, and we obtain a cohomological description of $\ol{\Omega}_E$,
namely $\ol{\Omega}_E\cong \ol{\Lambda}_M\cong H^1(H,\End_B(M))$. This result is one
of the key arguments in \cite{CM}.

\end{document}